\DeclareMathAlphabet{\mathpzc}{OT1}{pzc}{m}{it}
\def\makeCal#1{
	\expandafter\newcommand\csname c#1\endcsname{\mathcal{#1}}}
\def\makeBB#1{
	\expandafter\newcommand\csname b#1\endcsname{\mathbb{#1}}}
\def\makeFrak#1{
	\expandafter\newcommand\csname f#1\endcsname{\mathfrak{#1}}}
\edef\y{\@Alph\count@}
\newtheorem{theorem}{Theorem}[section]
\newtheorem{proposition}[theorem]{Proposition}
\newtheorem{lemma}[theorem]{Lemma}
\newtheorem{corollary}[theorem]{Corollary}
\theoremstyle{definition}
\newtheorem{definition}[theorem]{Definition}
\newtheorem{remark}[theorem]{Remark}
\newtheorem{example}[theorem]{Example}
\newtheorem{question}[theorem]{Question}
\newcommand {\id}{\operatorname{id}}
\newcommand{\coker}{\operatorname{coker}}
\newcommand{\End}{\operatorname{End}}
\newcommand{\spec}{\operatorname{spec}}
\newcommand{\ul}{Ulrich }
\newcommand{\rul}{relatively Ulrich }
\newcommand{\ses}{short exact sequence }
\newcommand{\les}{long exact sequence }
\newcommand{\wrt}{with respect to }
\numberwithin{equation}{section}
\begin{document}

\title[Triviality of direct image of coherent sheaves]{On the triviality of direct image of coherent sheaves}

\author[I. Biswas]{Indranil Biswas}

\address{Department of Mathematics, Shiv Nadar University, NH91, Tehsil
Dadri, Greater Noida, Uttar Pradesh 201314, India}

\email{indranil.biswas@snu.edu.in, indranil29@gmail.com}

\author[J. Pine]{Jagadish Pine}

\address{Indian Institute of Science Education and Research, Pune,  Main Academic, Dr Homi Bhabha Rd, Pashan, Maharashtra 411008}

\email{math.jagadishpine@gmail.com, jagadish.pine@iiserpune.ac.in}

\date{}
	
\keywords{Relatively Ulrich sheaf; Abelian covering; Direct image; Ulrich bundle}
	
\subjclass[2020]{14H30, 14H60, 14J60, 14M10}

\begin{abstract}
Let $\pi\,:\, X \,\longrightarrow\, Y$ be a finite morphism of projective varieties
defined over an algebraically closed field of characteristic zero. We study the
necessary and sufficient criteria for $\pi$ such that there exists a coherent sheaf $E$ on $X$ whose direct
image $\pi_*E$ is a trivial vector bundle on $Y$ of positive rank. When $X$ is smooth and $Y$ is Cohen-Macaulay, such a coherent sheaf is necessarily locally free. We show that the existence of such a coherent sheaf $E$ is guided by the properties of the branching divisor of $\pi$. When the covering $\pi\,:\, X \,\longrightarrow\, Y$ is
admissible abelian Galois, we give a complete answer. As an application, it is shown that
every smooth admissible abelian Galois covering of $\mathbb{P}^n$ supports an Ulrich bundle.
\end{abstract}
	
\maketitle
	
\section{Introduction}

Throughout the article, the base field is denoted by $k$, which is assumed to be algebraically closed of characteristic
zero. Throughout, by a variety we mean an integral separated scheme of finite type over $k$. The purpose of this article is to investigate the following question:
	
\begin{question} \label{mainquest}
Let $\pi\,:\, X \,\longrightarrow\, Y$ be a finite morphism of projective varieties. Under what conditions does
there exist a coherent sheaf $E$ on $X$ such that the direct image $\pi_* E$ is a trivial vector bundle of positive rank on $Y$?
\end{question}
	
In this paper, we analyze both necessary and sufficient conditions on the finite map $\pi$ for the 
existence of such a coherent sheaf $E$ on $X$.

The motivation for Question \ref{mainquest} came from the study of \ul vector bundles in smooth projective varieties. 
Consider a polarized variety $(X,\, \cO_X(1))$ with an embedding $X\,\hookrightarrow \,\bP^N$ given by the 
very ample line bundle $\cO_X(1)$. Denote the dimension of $X$ by $n$. Consider a finite linear projection 
$\pi\,:\, X \,\longrightarrow \,\bP^n$ such that $\pi^*\cO_{\bP^n}(1)\,\cong\,\cO_X(1)$. Then a vector bundle
$E$ on $X$ is called \ul \wrt $\pi$
if the direct image $\pi_*E$ is trivial. When $\cO_X(1)$ is an ample and globally generated 
polarization, this definition of \ul vector bundle has a natural extension due to the following result 
\cite[Proposition 5.4]{eisenbud2003resultants}: If there exists an \ul vector bundle on $X$ for a polarization 
$H$, then there is an \ul vector bundle of much larger rank for any multiple $\alpha \cdot H$, where $\alpha
\,\in\, \bZ_{>0}$.

The question \ref{mainquest} is a natural generalization of this to the above setup of a given finite morphism 
$\pi\,:\, X \,\longrightarrow\, Y$ between projective varieties. Building on the ideas introduced in \cite{MOHANKUMAR2025107946}, the notion of a \emph{relatively Ulrich bundle} was defined in \cite[Definition 4.1]{parameswaran2024ulrich}. Very recently, we became aware that this notion had also been defined more generally for coherent sheaves in \cite[Definition 4.14]{AK2026}, although for quite different reasons. Here, we state this notion in this more general form, namely for coherent sheaves. The definition is recalled below.
	
\begin{definition} \label{defi}
Let $\pi\,:\, X \,\longrightarrow \, Y$ be a finite morphism between projective varieties.
Then a coherent sheaf $E$ on $X$ is called a relatively \ul sheaf \wrt $\pi$ if $\pi_*E \cong \cO_{Y}^{\oplus r}$ for some natural number $r \in \bN$. 
\end{definition}
	
\begin{remark} \label{bundleremark1}
	By Proposition \ref{mostcrucialprop1}, if a \rul sheaf exists on $X$ \wrt $\pi$, then $\pi$ is surjective and $E$ is torsionfree. The same proposition also shows the following: If $X$ is normal, and $Y$ satisfies $S_2$, then $E$ is a reflexive sheaf. Moreover, if $X$ is smooth and $Y$ is Cohen-Macaulay, then every \rul sheaf on $X$ \wrt $\pi$ is locally free.
\end{remark}	
	
Thus, when $Y$ is a projective space, a relatively \ul bundle is an \ul bundle. Take two
finite morphisms $\pi_1\,:\, X\, 
\longrightarrow\, Y$ and $\pi_2\,:\, Y \,\longrightarrow\, \bP^n$, and set
$\pi\,:=\, \pi_2 \circ \pi_1 \,:\, X \,\longrightarrow\,
\bP^n$ to be the composition of them. Let $\cO_Y(1)\,:=\, \pi_2^* \cO_{\bP^n}(1)$ and
$\cO_X(1)\,:=\, \pi^* \cO_{\bP^n}(1)$ be the ample line bundles on $Y$ and $X$ respectively. Let $E$ (respectively, $F$)
be a \rul vector bundle on $X$ (respectively, $Y$) \wrt $\pi_1$ (respectively, $\pi_2$). Then using
the projection formula, we see that $(\pi_1^*F) \otimes E$ is an \ul vector bundle on 
$X$ \wrt $\cO_X(1)$.
	
The study of  \rul vector bundles is primarily motivated by \cite[Theorem 
1.2]{parameswaran2024ulrich}. This theorem of \cite{parameswaran2024ulrich}
reduces the problem of showing the existence of an \ul vector
bundle for a cyclic covering $\pi\,:\, X \,\longrightarrow\, \bP^n$ to the problem of finding a relatively \ul vector bundle on a cyclic covering of 
certain complete intersection curves in $\bP^n$.
Thus, the Eisenbud-Schreyer conjecture, \cite{eisenbud2003resultants}, on the existence of \ul vector bundles is closely related to 
the existence of \rul vector bundles. This approach also suggests a new perspective on the Eisenbud-Schreyer conjecture.

As shown in Lemma \ref{nonexistenceetale} and Corollary \ref{cor1}, nontrivial \'{e}tale covering maps between irreducible smooth projective varieties do not admit any relatively Ulrich vector bundle. Accordingly, we restrict our attention to finite ramified covering maps. In this paper, we address Question \ref{mainquest} for ramified abelian Galois covering maps. The main result is described below.

Let $\pi\,:\, X \,\longrightarrow\, Y$ be an admissible abelian Galois covering of degree $d$
between projective varieties (see Definition \ref{admissibleab}). So it is
a composition of finitely many standard cyclic coverings of degrees $d_1,\, d_2,\, \cdots, \,d_l$; see Section 3. In particular, standard cyclic covers are admissible covers. Then the sheaf 
$\pi_* \cO_X$ splits as a direct sum of line bundles, consisting of the line bundles $\cO_Y$, $\{M_i\,\,\big\vert\,\, 1 
\,\le\, i \,\le\, l\}$, and their tensor products (see \eqref{a3}).

Our main theorem provides a necessary and sufficient condition for the existence of a \rul coherent sheaf on $X$ \wrt an admissible abelian Galois covering map $\pi$.

\begin{theorem}[{Proposition \ref{dfoldsurj}, Theorem \ref{sufficient}}, and Corollary \ref{admissiblecor}]\label{main1}
There exists a \rul coherent sheaf $E$ on $X$ \wrt $\pi$ if and only if the branch divisor
$B_i \,\in\, H^0(Y,\, M_i^{\otimes d_i})$ is in the image of the $d_i$-fold multiplication map
$$H^0(Y,\, M_i)^{\otimes d_i}\ \longrightarrow\ H^0(Y,\, M_i^{\otimes d_i})$$
for all $1 \,\le\, i \,\le\, l$.
\end{theorem}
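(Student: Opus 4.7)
\bigskip

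\noindent\textbf{Proof plan.}

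The plan is to diagonalize $\pi_*E$ under the Galois action of $G\,=\,\bZ/d_1\times\cdots\times\bZ/d_l$ and reduce the question to the existence of certain matrix factorizations of each branch section $B_i$.

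\smallskip

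\noindent\emph{Common setup.}
It suffices to treat the case in which $E$ is $G$-equivariant: in the necessity direction, replace $E$ by $\bigoplus_{g\in G}g^*E$, whose push-forward is $(\pi_*E)^{\oplus |G|}$, still trivial; in the sufficiency direction, build $E$ equivariantly from the outset. In characteristic zero, the $G$-equivariant sheaf $\pi_*E$ decomposes into isotypic components $\bigoplus_{\chi\in\widehat G}F_\chi$. If $\pi_*E\,\cong\,\cO_Y^N$, then the $G$-action factors through $\End(\cO_Y^N)\,=\,\operatorname{Mat}_N(k)$, so each $F_\chi$ is itself a trivial subbundle $\cO_Y^{n_\chi}$. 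For each index $i$, the $\pi_*\cO_X$-module structure restricts to maps $M_i^{-1}\otimes F_\chi\,\to\,F_{\chi\chi_i}$, where $\chi_i$ is the generator of the dual of the $i$-th cyclic factor; these are encoded by matrices $A_{i,\chi}$ with entries in $H^0(Y,M_i)$, and the algebra relation $y_i^{d_i}\,=\,B_i$ yields the key identity
\[
A_{i,\chi\chi_i^{d_i-1}}\cdots A_{i,\chi\chi_i}\,A_{i,\chi}\;=\;B_i\cdot I_{n_\chi}.
\]

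\smallskip

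\noindent\emph{Necessity.}
Picking any $\chi$ with $n_\chi\,\geq\,1$ (which has all cyclic $\chi_i$-translates nonzero as soon as $B_i\,\neq\,0$) and reading any diagonal entry of the above matrix product expresses $B_i$ as a sum of $d_i$-fold products of sections of $M_i$, so $B_i$ lies in the image of the multiplication map. This proves Proposition~\ref{dfoldsurj}.

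\smallskip

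\noindent\emph{Sufficiency.}
Conversely, suppose each $B_i\,=\,\sum_k s_{i,1}^{(k)}\cdots s_{i,d_i}^{(k)}$ with $s_{i,j}^{(k)}\in H^0(Y,M_i)$. The plan is to construct $n_i\times n_i$ matrices $A_{i,0},\ldots,A_{i,d_i-1}$ over $H^0(Y,M_i)$ whose $d_i$-fold cyclic products all equal $B_i\cdot I_{n_i}$, and to use these to endow a trivial sheaf on $Y$ with a $\pi_*\cO_X$-module structure; relative $\operatorname{Spec}$ then yields an $\cO_X$-module $E$ on $X$ with $\pi_*E$ trivial. Operationally it is cleaner to factor $\pi$ as a tower of cyclic coverings $\pi_l\circ\cdots\circ\pi_1$, construct a \rul bundle for each cyclic step (the pullback of $B_i$ to the intermediate variety admits the same kind of factorization by pulling back the $s_{i,j}^{(k)}$), and tensor the resulting bundles up to $X$ via the projection formula.

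\smallskip

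\noindent\emph{Main obstacle.}
The essential technical step is the cyclic case of the sufficient direction: given $B\,=\,\sum_k s_1^{(k)}\cdots s_d^{(k)}\in H^0(Y,M^d)$, one must produce $n\times n$ matrices $A_0,\ldots,A_{d-1}$ with entries in $H^0(Y,M)$ whose $d$-fold cyclic products all equal $B\cdot I_n$, and verify that the resulting coherent $\cO_X$-module is actually locally free on the smooth variety $X$. For $d\,=\,2$ this is the classical Clifford-algebra matrix factorization; for $d\,\geq\,3$ a genuine $d$-fold matrix factorization of $B$ must be exhibited, which is the main content of Theorem~\ref{sufficient}.
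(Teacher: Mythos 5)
Your plan follows essentially the same route as the paper: necessity by extracting, from the $\pi_*\cO_X$-module structure on the trivial bundle $\pi_*E$, matrices over $H^0(Y,\,M_i)$ whose $d_i$-fold products equal $B_i\cdot I$ and reading off a diagonal entry; sufficiency by factoring $\pi$ into a tower of cyclic coverings, producing a matrix factorization for each step, and tensoring the resulting bundles together via the projection formula (this is exactly Proposition~\ref{dfoldsurj}, Theorem~\ref{sufficient} and its corollary). Two remarks. First, your equivariant isotypic decomposition in the necessity direction is more machinery than is needed: the paper simply observes that $\pi_*E\cong\cO_Y^{\oplus m}$ is a $\pi_*\cO_X$-module, so multiplication by the canonical generator of the $M_i^{-1}$-summand is a single $m\times m$ matrix $A_i$ with entries in $H^0(Y,\,M_i)$ satisfying $A_i^{d_i}=s_i\cdot \id$, and any diagonal entry of $A_i^{d_i}$ already exhibits $s_i$ as a sum of $d_i$-fold products; your version is correct but the equivariant reduction and the bookkeeping of which $n_\chi$ vanish are avoidable. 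Second, the one substantive step you leave open --- exhibiting, for $d\ge 3$, square matrices $B_1,\dots,B_d$ with $B_1\cdots B_d=\bigl(T^d-\sum_k(\varpi^*a^1_k\otimes e)\cdots(\varpi^*a^d_k\otimes e)\bigr)\cdot\id$ --- is precisely what the paper imports from \cite[Lemma 1.5]{HERZOG1991187}, after passing to the compactification $\bP=\mathrm{Proj}(\mathrm{Sym}(\cO_Y\oplus L^{-1}))$ and treating $T$ and the pulled-back sections as variables of a polynomial ring; your proposal correctly isolates this as the crux but does not supply it, so as written the sufficiency direction is incomplete. (The local freeness of the cokernel $E$ on $X$, which you flag, is standard: $\pi$ is finite and $\pi_*E$ locally free forces $E$ to be maximal Cohen--Macaulay over the regular local rings of $X$, hence free.)
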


\begin{remark}
In fact, we prove a sufficient criterion for a more general class of covering maps, namely standard abelian Galois covers; see Corollary \ref{standardsuff}. As observed in Remark \ref{bundleremark1}, when $X$ is smooth, and $Y$ is Cohen-Macaulay,
Theorem \ref{main1} provides a necessary and sufficient criterion for the existence of relatively Ulrich vector bundles on $X$ \wrt $\pi$. 
\end{remark}

Set $Y$ to be $\bP^n$. Then for an admissible abelian Galois covering $\pi$, we see that the line bundles $M_i$ are
very ample. In this case, the $d_i$-fold multiplication map is surjective, and thus the branch divisors $B_i$ 
are in the image of $H^0(Y,\, M_i)^{\otimes d_i}$. Let $\cO_X(1)$ be the ample and globally generated line 
bundle $\pi^* \cO_{\bP^n}(1)$. Thus, as a consequence of Theorem \ref{main1} we get the following:

\begin{corollary} \label{main2}
Let $\pi\,:\, X \,\longrightarrow\, \bP^n$ be an admissible abelian Galois covering. Then $X$ supports
an \ul sheaf \wrt $\cO_X(1)$.
\end{corollary}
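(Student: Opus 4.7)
The plan is to apply Theorem \ref{main1} with $Y = \bP^n$ and verify its hypothesis in this setting. Once a \rul bundle $E$ with respect to $\pi$ is produced, it is automatically Ulrich with respect to $\cO_X(1) = \pi^* \cO_{\bP^n}(1)$, because Definition \ref{defi} for this choice of $Y$ and polarization is precisely the classical Ulrich condition discussed in the introduction. Hence the only task is to check that each branching section $B_i$ lies in the image of the $d_i$-fold multiplication map.

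Since $\mathrm{Pic}(\bP^n) = \bZ$, each $M_i$ equals $\cO_{\bP^n}(a_i)$ for some integer $a_i$; the standard construction of a ramified abelian Galois cover of $\bP^n$ (building $X$ via $\pi_*\cO_X$ inside a sum of line bundles, with the $B_i$ cutting out the branch divisors) forces $a_i \geq 1$, so every $M_i$ is very ample. I would then verify the surjectivity of
\[
\mu_i \,:\, H^0(\bP^n,\, \cO_{\bP^n}(a_i))^{\otimes d_i} \ \longrightarrow\ H^0(\bP^n,\, \cO_{\bP^n}(a_i d_i))
\]
by a monomial argument: the target is spanned by monomials $x_0^{e_0}\cdots x_n^{e_n}$ of total degree $a_i d_i$, and any such monomial is a product of $d_i$ monomials of degree $a_i$ obtained by grouping its $a_i d_i$ linear factors (with multiplicity) into $d_i$ blocks of size $a_i$. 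Thus $\mu_i$ is surjective, and in particular $B_i \in \im(\mu_i)$ for every $i$.

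With the hypothesis of Theorem \ref{main1} verified, that theorem produces a vector bundle $E$ on $X$ with $\pi_*E$ trivial, which completes the argument. The only step with content is the surjectivity of $\mu_i$, and that is elementary; the substance of the corollary is entirely packaged inside Theorem \ref{main1}, so no genuine obstacle remains beyond checking the polynomial multiplication statement and the bookkeeping that identifies relatively Ulrich with respect to $\pi$ and Ulrich with respect to $\cO_X(1)$.
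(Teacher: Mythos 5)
Your proposal is correct and follows essentially the same route as the paper: the authors also observe that on $\bP^n$ each $M_i$ is very ample, so the $d_i$-fold multiplication map $H^0(\bP^n, M_i)^{\otimes d_i} \to H^0(\bP^n, M_i^{\otimes d_i})$ is surjective (this is their Proposition \ref{ciexamples} specialized to $Z=\bP^N$, and your monomial argument is the elementary version of it), and then they invoke Theorem \ref{main1}. The only difference is that you spell out the surjectivity and the identification of relatively Ulrich with Ulrich in slightly more detail than the paper does.
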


Therefore, by Proposition \ref{mostcrucialprop1}, whenever the total space $X$ of the admissible abelian cover is smooth, it supports an \ul vector bundle \wrt $\cO_X(1)$. Then, as a special case of Corollary \ref{main2}, when $\pi$ is a cyclic covering of degree $d\,:=\, d_1$, we 
recover \cite[Theorem 1.1]{parameswaran2024ulrich} and \cite[Theorem 1.2]{MOHANKUMAR2025107946}.

An interesting connection  between relatively Ulrich sheaves and $\bA^1$-homotopy theory is developed in \cite{AK2026}. For a finite and surjective morphism $\pi\,:\,X \longrightarrow Y$ of smooth varieties, there is a certain line bundle $L$ on $X$, called the relative orientation of $\pi$. The authors prove a sufficient criterion for the well definedness of $\bA^1$-degree of $\pi$. In particular,  they show that if $L$ is relatively Ulrich \wrt $\pi$ and $H^0(Y,\, \cO_Y)\,=\,k$, then the $\bA^1$-degree of $\pi$ is well defined \cite[Theorem A]{AK2026}. Consequently, these developments give further significance to our necessary and sufficient criterion for the existence of \rul sheaves.

\begin{remark}
	The $d_i$-fold multiplication map $H^0(Y,\, M_i)^{\otimes d_i}\ \longrightarrow\ H^0(Y,\, M_i^{\otimes d_i})$ factors through the symmetric power $Sym^{d_i} (H^0(Y,\, M_i))$.  Indeed, locally, after choosing a trivialization of $M_i$, sections of $M_i$ may be viewed as regular functions , and multiplication of regular functions is commutative. Hence, for any permutation $\sigma \,\in\, S_{d_i}$, the tensors $s_1 \otimes s_2 \otimes \cdots \otimes s_{d_i}$ and $s_{\sigma(1)} \otimes s_{\sigma(2)} \otimes \cdots \otimes s_{\sigma(d_i)}$ have same image under the multiplication map. Therefore the multiplication map descends to a well defined map $Sym^{d_i} (H^0(Y,\, M_i)) \,\longrightarrow\, H^0(Y,\, M^{\otimes d_i})$.
\end{remark}

The preceding criterion for the existence of \rul sheaves is related to the normal generation of line bundles. In particular, if the line bundles $M_i$  on a variety $Y$ are normally generated, then every admissible abelian cover $\pi\,:\,X \longrightarrow Y$ admits  \rul sheaves. Indeed, normal generation requires the surjectivity of the corresponding multiplication maps in all degrees, whereas the present criterion involves only the fixed degrees $d_i$ relevant to the cover. Along these lines, Theorem \ref{main1} is applied in Example \ref{ngex1} and Example \ref{ngex2} to  construct interesting examples of abelian Galois covers admitting \rul sheaves.

In Corollary \ref{comintcor}, we prove that admissible abelian Galois covers of complete intersection subvarieties of $\bP^N$ admit \rul sheaves. In Proposition \ref{minrank1} and Corollary \ref{minrank2}, we make some observations concerning the possible ranks of  \rul sheaves. In particular, in Corollary \ref{minrank2}, we exhibits cases in which, for branch divisor of certain specific forms, there exist \rul sheaves of rank $1$. Moreover, in Remark \ref{gencliffrel}, we describe a relation between finite dimensional representations of certain generalized Clifford algebras and the existence of \rul sheaves.

For the rest of this section, let $\pi\,:\, X \,\longrightarrow\, Y$ be a
ramified cyclic Galois covering of degree $d$ between smooth
projective varieties. In Proposition \ref{ggcyclic}, it is proved that the existence of a \rul vector
bundle on 
$X$ implies that $M$ (see \ref{a4}) is a globally generated line bundle on $X$. Our sufficient criterion 
for the existence of a \rul vector bundle, namely the condition that the branch divisor is in the image of
the $d$-fold multiplication $H^0(Y,\, M)^{\otimes d} \, \longrightarrow\,
H^0(Y,\, M^{\otimes d})$, is actually stronger than the condition that $M$ is globally 
generated (see Proposition \ref{dfoldgg}). In Example \ref{ggdfoldexample}, we show that these two 
conditions are not equivalent. This provides examples of cyclic coverings $\pi\,:\, X \,\longrightarrow\, Y$ such 
that $M$ is a globally generated line bundle and yet there is no \rul vector bundle on $X$.

\section{Examples of relatively \ul bundles}

The base field is denoted by $k$, which is assumed to be algebraically closed and of characteristic zero. In 
this section, we provide examples of \rul vector bundles. It is also shown that the existence of \rul
vector bundles is connected to the 
size of the ramification divisor of $\pi$. When $\pi$ is \'etale, or when the
ramification divisor for $\pi$ is small, \rul vector bundles don't exist. We also prove a proposition showing that a coherent sheaf whose pushforward under a finite morphism is locally free is necessarily torsionfree. Moreover, when the domain is smooth and the target is Cohen-Macaulay, such a sheaf is locally free.

Let $C$ be an irreducible smooth projective curve over $k$. Denote the genus of $C$ by $g$.
Let
$$
\Theta\ :=\ \{L\, \in\, {\rm Pic}^{g-1}(C)\,\, \big\vert\,\, H^0(C,\, L)\, \not=\, 0\}
$$
be the theta divisor on ${\rm Pic}^{g-1}(C)$.
For a vector bundle $E$ on $\bP^1$, the vector bundle $E\otimes \cO_{\bP^1}(d)$ will be denoted
by $E(d)$.

Let $\pi\,:\,C \,\longrightarrow \,\bP^1$ be a finite ramified covering map of degree $d$.
Since all vector bundles on $\bP^1$ or a rank $r$ decomposes as $\bigoplus_{i=1}^r {\mathcal O}_{\bP^1}
(n_i)$, with $n_i\, \in\, {\mathbb Z}$, a vector bundle $E$ on $\bP^1$ is trivial if and only if
$H^j(\bP^1,\, E(-1))\,=\,0$ for $j\,=\,0,\, 1$. For a line bundle $L \,\in\, 
\text{Pic}(C)$, we have $\pi_* L \,\cong\, \cO_{\bP^1}^{\oplus d}$ if and only if $$H^j(\bP^1,\, 
(\pi_* L)(-1)) \ \cong\ H^j(C,\, L \otimes \pi^*\cO_{\bP^1}(-1))\ =\ 0$$ for $j\,=\,0,\, 1$. So $\pi_* L 
\,\cong\, \cO_{\bP^1}^{\oplus d}$ if and only if $\text{degree}(L\otimes\pi^*\cO_{\bP^1}(-1))\,=\,g-1$ and
$H^0(C,\, L\otimes\pi^*\cO_{\bP^1}(-1))\,=\,0$. In other words, the following holds:

\begin{lemma}
A line bundle $L \,\in\, {\rm Pic}(C)$ is relatively \ul \wrt $\pi$
if and only if $$L\otimes\pi^*\cO_{\bP^1}(-1) \ \in\ {\rm Pic}^{g-1}(C) \setminus \Theta .$$
\end{lemma}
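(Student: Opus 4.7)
The plan is to essentially formalize the computation already outlined in the paragraph preceding the lemma, making explicit the role of Riemann--Roch on $C$. Write $L' := L\otimes\pi^*\cO_{\bP^1}(-1)$ for brevity.

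First I would record the standard criterion on $\bP^1$: since every vector bundle on $\bP^1$ splits as a sum of line bundles, a rank $d$ bundle $E$ on $\bP^1$ is trivial if and only if $H^0(\bP^1,\,E(-1))\,=\,0\,=\,H^1(\bP^1,\,E(-1))$. Taking $E\,=\,\pi_*L$, which has rank $d$ since $\pi$ has degree $d$, the projection formula gives
$(\pi_*L)(-1)\,\cong\,\pi_*(L')$, and since $\pi$ is finite (affine), $H^j(\bP^1,\,\pi_*L')\,\cong\,H^j(C,\,L')$ for $j\,=\,0,\,1$. Thus $\pi_*L$ is trivial if and only if $H^0(C,\,L')\,=\,0\,=\,H^1(C,\,L')$.

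Next I would use Riemann--Roch on $C$ to compare these two vanishings with the condition $L'\,\in\,\mathrm{Pic}^{g-1}(C)\setminus\Theta$. By Riemann--Roch,
\[
h^0(C,\,L')-h^1(C,\,L')\ =\ \deg(L')-g+1.
\]
If both cohomologies vanish, then $\deg(L')\,=\,g-1$, so $L'\,\in\,\mathrm{Pic}^{g-1}(C)$, and $h^0(C,\,L')\,=\,0$ means $L'\,\notin\,\Theta$. Conversely, if $L'\,\in\,\mathrm{Pic}^{g-1}(C)\setminus\Theta$, then $\deg(L')\,=\,g-1$ and $h^0(C,\,L')\,=\,0$, and Riemann--Roch forces $h^1(C,\,L')\,=\,0$ as well. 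Combining the two directions yields the claimed equivalence.

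There is no real obstacle: the argument is a direct translation between the triviality criterion on $\bP^1$ and the divisorial condition on $C$, together with one application of Riemann--Roch. The only point to be careful about is invoking the projection formula and the finiteness of $\pi$ correctly so that the cohomology groups on $\bP^1$ and on $C$ are identified; all of this is standard.
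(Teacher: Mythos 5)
Your proof is correct and follows essentially the same route as the paper: the triviality criterion on $\bP^1$ via vanishing of $H^0$ and $H^1$ of the twist by $\cO_{\bP^1}(-1)$, the identification of cohomology under the finite map $\pi$ together with the projection formula, and Riemann--Roch on $C$ to translate the two vanishings into the condition $\deg(L\otimes\pi^*\cO_{\bP^1}(-1))=g-1$ and $L\otimes\pi^*\cO_{\bP^1}(-1)\notin\Theta$. The only difference is that you spell out the Riemann--Roch step explicitly, which the paper leaves implicit.
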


Let $M_C(r, d)$ denote the moduli space of $S$-equivalence classes of semistable vector bundles
on $C$ of rank $r$
and degree $d$. The moduli space $M_C(r, r(g-1))$ has the generalized Theta divisor
$\Theta$ given by the locus of all $E$ with $H^0(C,\, E)\, \not=\, 0$.

Using similar arguments as above, the following is deduced.

\begin{lemma}
A vector bundle of rank $r$ on $C$ is relatively \ul \wrt $\pi$ if and only if
$$E\otimes\pi^*\cO_{\bP^1}(-1)\ \in\ M_C(r, r(g-1)) \setminus \Theta .$$
\end{lemma}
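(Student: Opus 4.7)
The plan is to mimic the line-bundle argument from the preceding lemma, but now in the rank $r$ setting. Writing $F \,:=\, E \otimes \pi^*\cO_{\bP^1}(-1)$, the first step is to translate the triviality of $\pi_*E$ into cohomological conditions on $F$. Since every vector bundle on $\bP^1$ splits as a direct sum of line bundles, the rank $rd$ bundle $\pi_*E$ is trivial if and only if $H^j(\bP^1,\, (\pi_*E)(-1)) \,=\, 0$ for $j\,=\,0,\,1$. Combining the projection formula with the finiteness of $\pi$ converts this to $H^j(C,\, F) \,=\, 0$ for $j\,=\,0,\,1$. Riemann--Roch on $C$ then automatically forces $\deg F \,=\, r(g-1)$, and the condition $H^0(C,\, F) \,=\, 0$ is precisely $F \,\notin\, \Theta$.

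Once these translations are in place, the ``if'' direction is immediate: for $F \,\in\, M_C(r,\, r(g-1))\setminus\Theta$, we have $H^0(C,\, F) \,=\, 0$ and $\chi(F) \,=\, 0$ by Riemann--Roch, hence $H^1(C,\, F) \,=\, 0$ as well, so $\pi_*E$ is trivial. The real content of the lemma is the other direction: one must show that the cohomological vanishings $H^0(F) \,=\, H^1(F) \,=\, 0$ force $F$ to be semistable, so that $F$ actually defines a point of $M_C(r,\, r(g-1))$.

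I expect this semistability step to be the main obstacle, though it is short. I would argue it by contradiction. If $F$ were not semistable, the first step of its Harder--Narasimhan filtration would provide a subbundle $F' \,\subset\, F$ of some rank $s$ with $\mu(F') \,>\, \mu(F) \,=\, g-1$, so $\deg F' \,\geq\, s(g-1) + 1$ and Riemann--Roch gives $\chi(F') \,\geq\, 1$. This produces nonzero sections of $F'$, hence of $F$, contradicting $H^0(C,\, F) \,=\, 0$. Apart from this observation, everything is a routine consequence of the projection formula, Riemann--Roch, and the splitting of vector bundles on $\bP^1$.
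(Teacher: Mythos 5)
Your proof is correct and follows the same route the paper intends: the paper justifies this lemma only by reference (``using similar arguments as above''), meaning the cohomological characterization of triviality on $\bP^1$ combined with the projection formula and Riemann--Roch, which is exactly what you carry out. The one substantive point you add is the verification that $H^0(C,\,F)=0$ together with $\mu(F)=g-1$ forces $F$ to be semistable, so that $F$ genuinely defines a point of $M_C(r,\, r(g-1))$; your Harder--Narasimhan argument for this is correct, and it is a worthwhile supplement, since the paper leaves this step implicit here and only records an analogous semistability argument (by pushing forward a destabilizing subbundle into $\cO_D^{\oplus rd}$) in the later lemma on \'etale coverings.
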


In the following lemma, by a nontrivial \'etale covering $\pi\,:\, C \,\longrightarrow\, D$, we mean
that $C$ is not a disjoint union of finitely many copies of $D$.

\begin{lemma}\label{nonexistenceetale}
Nontrivial \'etale covering maps $\pi\,:\, C \,\longrightarrow\, D$ between smooth
projective curves do not admit any \rul vector bundle.
\end{lemma}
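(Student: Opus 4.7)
The plan is to derive a contradiction by showing that such an $E$ would simultaneously have too few and too many global sections. First, I would reduce to the case that $C$ is connected: writing $C = \bigsqcup_j C_j$ and $\pi_j := \pi|_{C_j}$, the decomposition $\pi_*E = \bigoplus_j \pi_{j,*}(E|_{C_j})$, together with the hypothesized triviality $\pi_*E \cong \cO_D^{\oplus dr}$ and Krull--Schmidt for vector bundles on $D$, forces each summand to be a trivial bundle. Since $\pi$ is nontrivial, some $\pi_j\colon C_j \to D$ is a connected nontrivial \'etale cover of degree $\geq 2$, so I may assume $C$ connected and $d := \deg \pi \geq 2$. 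Combining the identity $\chi(C,E) = \chi(D,\pi_*E)$ with Riemann--Roch on both curves and the \'etale Riemann--Hurwitz relation $1 - g_C = d(1 - g_D)$, a short computation yields $\deg E = 0$; the same relation also forces $g_C, g_D \geq 1$ (were $g_D = 0$, the relation would give $g_C = 1 - d < 0$).

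Next I would prove that $E$ is semistable of slope $0$ on $C$. The key observation is that \'etale direct image is slope-preserving up to the factor $1/d$: for any torsion-free coherent subsheaf $F \subseteq E$, the sheaf $\pi_*F$ is a torsion-free subsheaf of $\pi_*E$ (by exactness of $\pi_*$) with $\operatorname{rank}(\pi_*F) = d \cdot \operatorname{rank}(F)$ and $\deg(\pi_*F) = \deg(F)$ (by the same Riemann--Hurwitz computation applied to $F$), so $\mu(\pi_*F) = \mu(F)/d$. Since $\pi_*E \cong \cO_D^{\oplus dr}$ is manifestly semistable of slope $0$, any $F \subseteq E$ with $\mu(F) > 0$ would destabilize $\pi_*E$, contradicting triviality. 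Hence $E$ is semistable of slope $0$.

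Finally, I would derive the contradiction from a global-section count. On one hand $h^0(C,E) = h^0(D,\pi_*E) = dr$. On the other hand, for a semistable vector bundle of slope $0$ and rank $r$ on a curve of genus $\geq 1$, I claim $h^0 \leq r$: fix a Jordan--H\"older filtration of $E$ with stable slope-$0$ factors $\{F_i\}$; any $F_i$ of rank $\geq 2$ has $h^0(F_i) = 0$, since a nonzero section would give an injection $\cO_C \hookrightarrow F_i$, destabilizing $F_i$, while a rank-$1$ stable $F_i$ is a degree-$0$ line bundle with $h^0 \leq 1$. Summing, $h^0(E) \leq \sum_i \operatorname{rank}(F_i) = r$. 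Since $d \geq 2$, the resulting $dr \leq r$ is absurd. The main point to keep straight in this argument is the slope-preservation identity $\mu(\pi_*F) = \mu(F)/d$ for \'etale $\pi$ together with the bound $h^0 \leq r$ on semistable slope-$0$ bundles; these together encode why the \'etale hypothesis, and in particular the positivity of $g_D$ it forces in the nontrivial case, obstructs the existence of a \rul bundle.
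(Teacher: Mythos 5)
Your proof is correct and follows essentially the same route as the paper: Riemann--Hurwitz gives $\deg E = 0$, the slope identity $\mu(\pi_*F) = \mu(F)/d$ forces $E$ to be semistable of slope $0$, and the bound $h^0 \le \operatorname{rank}$ for semistable slope-zero bundles contradicts $h^0(C,E) = dr$ with $d \ge 2$. The only cosmetic differences are your explicit Krull--Schmidt reduction to connected $C$ and your use of a Jordan--H\"older filtration to prove $h^0(C,E)\le r$, where the paper instead argues via the evaluation map at a point and the positive degree of the saturation of a section vanishing there.
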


\begin{proof}
For an \'etale covering map $\pi\,:\,C \,\longrightarrow\, D$ with
$\text{degree}(\pi)\,=\,d \,\ge\, 2$, by Riemann--Hurwitz formula,
\begin{equation}\label{e0}
2g(C)-2\ =\ d \cdot (2g(D)-2),
\end{equation}
where $g(D)$ (respectively, $g(C)$) is the genus of $D$ (respectively, $C$).
Let $V$ be a \rul vector bundle of rank $r$ on $C$ \wrt $\pi$. Then
\begin{equation}\label{e1}
H^0(C,\, V)\,\cong\,H^0(D,\, \pi_*V)\,=\, H^0(D,\, \cO_D^{\oplus rd})\,=\, k^{\oplus rd}.
\end{equation}
Using \eqref{e0} it follows that for a vector bundle $F$ on $C$ of rank $r'$,
\begin{equation}\label{e1b}
\text{degree}(F)\ =\ \chi(F)+ r'(g(C)-1) \ =\ \chi(\pi_* F)+ r'd(g(D)-1)\ =\
\text{degree}(\pi_*F).
\end{equation}
In particular, we have
\begin{equation}\label{e1c}
\text{degree}(V)\ =\ \text{degree}(\pi_* V)\ =\ 0.
\end{equation}

It can be shown that the vector bundle $V$ is semistable. Indeed, if $W\, \subset\, V$ is
a subbundle with $\mu(W)\, > \, \mu(V)\,=\, 0$ (see \eqref{e1c}), where $\mu(A)\,:=\,
\frac{\text{degree}(A)}{\text{rank}(A)}$ (see \cite[Ch.~1, \S~1.2]{HL}), then from \eqref{e1b},
$$
\mu(\pi_* W)\ =\ \frac{1}{d}\cdot \mu (W) \ >\ 0.
$$
Now the subbundle $\pi_*W\, \subset\, \pi_*V\,=\, \cO_D^{\oplus rd}$ contradicts that fact that
$\cO_D^{\oplus rd}$ is semistable. Thus the vector bundle $V$ is semistable.

Since $V$ is semistable of degree zero (see \eqref{e1c}), it can be shown that
\begin{equation}\label{e1d}
\dim H^0(C,\, V)\ \leq \ {\rm rank}(V)\ =\ r.
\end{equation}
To see that take any point $y\, \in\, C$, and consider the evaluation map
$\phi_y\, :\, H^0(C,\, V)\,\longrightarrow\, V_y$ defined by $s\, \longmapsto\, s(y)$.
If \eqref{e1d} fails, then there is a nonzero section $0\, \not=\, s_0\, \in\, H^0(C,\, V)$
such that $\psi_y(s_0)\,=\, 0$. Let
$$
\gamma\ :\ {\mathcal O}_C\ \longrightarrow\ V
$$
be the homomorphism defined by $f\,\longmapsto\, f\cdot s_0$. Consider the torsion part
$Tor\, \subset\, V/\gamma({\mathcal O}_C)$ and consider the inverse image $q^{-1}(Tor)\,\subset\, V$,
where $q\, :\, V\, \longrightarrow\,V/\gamma({\mathcal O}_C)$ is the quotient map; note that
$q^{-1}(Tor)$ is a line subbundle of $V$. Since the homomorphism of fibers
$\gamma_y\, :\, k\, \longrightarrow\, V_y$ over $y$ vanishes, we have $Tor\,\not=\, 0$. Hence
$\text{degree}(q^{-1}(Tor)) \,>\, \text{degree}({\mathcal O}_C)\,=\, 0$. So
$q^{-1}(Tor)\,\subset\,V$ contradicts the fact that $V$ is semistable of degree zero.
Thus \eqref{e1d} is proved.

Note that \eqref{e1d} contradicts \eqref{e1} because $d\, >\,1$. This completes the proof.
\end{proof}

\begin{corollary}\label{cor1}
Nontrivial \'etale covering maps $\widehat{\pi}\,:\, X \,\longrightarrow\, Y$ between irreducible smooth
projective varieties do not admit any \rul vector bundle.
\end{corollary}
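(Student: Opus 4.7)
The plan is to reduce to the curve case already handled in Lemma \ref{nonexistenceetale}. Given a hypothetical \rul vector bundle $V$ on $X$ \wrt $\widehat{\pi}$, I would cut down to a suitably chosen smooth curve $C \,\subset\, Y$ such that the pullback $\widehat{\pi}_C\,:\, X_C\,:=\, X\times_Y C \,\longrightarrow\, C$ is still a nontrivial connected \'etale covering, and such that $V|_{X_C}$ is a \rul bundle \wrt $\widehat{\pi}_C$. Then Lemma \ref{nonexistenceetale} furnishes the desired contradiction.

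First I would construct $C$. Fix a very ample line bundle on $Y$ and let $C\, \subset\, Y$ be the smooth irreducible curve obtained by intersecting $\dim Y - 1$ general members of a sufficiently high power of this line bundle. Bertini's theorem guarantees the smoothness and irreducibility of $C$, and the usual Lefschetz-type theorem for fundamental groups (applicable since we work over an algebraically closed field of characteristic zero) implies that the induced map of \'etale fundamental groups $\pi_1^{\mathrm{et}}(C)\, \longrightarrow\, \pi_1^{\mathrm{et}}(Y)$ is surjective. Since $X$ is irreducible, the nontrivial \'etale cover $\widehat{\pi}$ corresponds to a finite quotient of $\pi_1^{\mathrm{et}}(Y)$ of order $\geq 2$; surjectivity then forces the pulled back cover $X_C\, \longrightarrow\, C$ to be a connected (hence irreducible, as it is smooth) \'etale cover of $C$ of degree $\geq 2$. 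In particular, $X_C$ is not a disjoint union of copies of $C$, so $\widehat{\pi}_C$ is nontrivial in the sense of Lemma \ref{nonexistenceetale}.

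Next I would transfer the \rul property. Since $\widehat{\pi}$ is finite and flat, flat base change along the closed immersion $C \,\hookrightarrow\, Y$ yields
$$
(\widehat{\pi}_C)_*(V|_{X_C})\ \cong\ (\widehat{\pi}_* V)\big|_C\ \cong\ \cO_C^{\oplus r\cdot \deg(\widehat{\pi})},
$$
where the last isomorphism uses that $\widehat{\pi}_* V$ is trivial on $Y$. Thus $V|_{X_C}$ is a \rul vector bundle on the smooth projective curve $X_C$ with respect to the nontrivial \'etale covering $\widehat{\pi}_C$, directly contradicting Lemma \ref{nonexistenceetale}.

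The main technical obstacle is ensuring that $C$ can be chosen so that $X_C$ remains irreducible (equivalently, connected), since otherwise the restricted cover could split off trivial components and Lemma \ref{nonexistenceetale} would not apply. This is precisely what is secured by the Lefschetz-type surjectivity $\pi_1^{\mathrm{et}}(C)\, \twoheadrightarrow\, \pi_1^{\mathrm{et}}(Y)$ for a general complete intersection curve, which is the only nontrivial input beyond the curve case and standard flat base change.
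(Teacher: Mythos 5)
Your proof is correct and takes essentially the same route as the paper: restrict to a smooth complete intersection curve $C\,\subset\, Y$ with $\pi_1(C)\,\twoheadrightarrow\,\pi_1(Y)$, observe that the restricted cover remains nontrivial and the restricted bundle remains relatively Ulrich, and invoke Lemma \ref{nonexistenceetale}. The additional details you supply (connectedness of the pulled-back cover via the Lefschetz surjectivity, and the base-change identification of the direct image) are precisely the points the paper leaves implicit.
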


\begin{proof}
Take $D\, \subset\, Y$ to be a smooth curve such that the homomorphism of fundamental groups
$\pi_1(D)\, \longrightarrow\, \pi_1(Y)$ is surjective; for example, take $D$ to be a smooth
complete intersection curve obtained by intersecting very ample smooth divisors of $Y$. Set
$C\,=\, \widehat{\pi}^{-1}(D)$ and $\pi\,=\, \widehat{\pi}\big\vert_C$. If $E$ is a \rul 
vector bundle on $X$ for $\widehat{\pi}$, then $E\big\vert_C$ is a \rul vector bundle on
$C$ for ${\pi}$. Now Lemma \ref{nonexistenceetale} completes the proof.
\end{proof}

In the following example, we construct a ramified double covering with no \rul line bundle.

\begin{example} \label{nonexsmall}
Let $\pi\,:\,C \,\longrightarrow\, D$ be a degree $2$ ramified covering
map between smooth projective curves with $g(D)\,\ge\, 1$. Then we have
$\pi_*\cO_C \,\cong\, \cO_D \oplus L^{-1}$ for some line bundle $L$. The branch locus $B$ for $\pi$
satisfies the condition $B \,\in\, |L^{\otimes 2}|$. Assume that $\text{degree}(L)\,=\,1$. Then 
the degree of the ramification divisor for $\pi$ is $2$. By Riemann--Hurwitz formula,
we have 
\begin{equation} \label{RHe100}
	2g(C)-2\,=\,2(2g(D)-2)+2.
\end{equation}
Thus, we have $g(C)\,=\, 2\cdot g(D).$ Let $M \,\in\, \text{Pic}(C)$ be such that
\begin{equation}\label{e3}
\pi_*M\ \cong\ \cO_D^{\oplus 2}.
\end{equation}
{}From \eqref{e3} it follows that
\begin{equation}\label{e2}
\text{degree}(M)\,=\,\chi(M) + g(C)-1\,=\, \chi(\cO_D^{\oplus 
2})+ g(C)-1\,=\, 2(1-g(D))+ g(C)-1\,=\,1.
\end{equation}
Since $g(C) \,\ge\, g(D) \,\ge\, 1$, from \eqref{e2} it follows that $\dim H^0(C,\, M)\, \leq\, 1$.
On the other hand, from \eqref{e3} it follows that $\dim H^0(C,\, M)\,=\, 2$. Consequently,
there is no \rul line bundle $M$ on $C$.
\end{example}

We generalize the Example \ref{nonexsmall} for arbitrary rank vector bundle in the following.

\begin{example}
Consider the morphism $\pi\,:\,C \,\longrightarrow\, D$ from Example \ref{nonexsmall}.
Let $E$ be a vector bundle on $C$ of rank $r \,\ge\, 2$ such that
	\begin{equation}\label{e3m}
		\pi_*E\ \cong\ \cO_D^{\oplus 2r}.
	\end{equation}
	{}It follows from \eqref{e3m}, together with the Riemann-Hurwitz formula in \eqref{RHe100}, that
	\begin{equation}\label{e4m}
		\text{degree}(E)\,=\,\chi(E) + r(g(C)-1)\,=\, \chi(\cO_D^{\oplus 
			2r})+ r(g(C)-1)\,=\, 2r(1-g(D))+ r(g(C)-1)\,=\,r.
	\end{equation}
	From \eqref{e3m}, it also follows that $E$ is generated by its global
sections. Since $r \,\ge\, 2$, a general section of $E$ has no zeros. Thus, we obtain a \ses
\[
0\ \longrightarrow\ \cO_C\ \longrightarrow\ E\ \longrightarrow\ E_1\ \longrightarrow\ 0,
\]
where the cokernel $E_1$ is a vector bundle of rank $(r-1)$. Since $E_1$ is a quotient of $E$, it is
also generated by its global sections. Moreover, $\text{degree}(E_1)\,=\, r$ and 
	\begin{equation}
		\dim H^0(C,\, E_1)\ \ge\ (2r-1).
	\end{equation}
	If $(r-1)\, \ge\, 2$, the above procedure can be repeated. Inductively, we obtain a sequence of vector bundles $E_1,\, E_2,\, \cdots,\, E_{r-1}$ such that $E_{r-1}$ is a line bundle of
degree $r$ and $$h^0(C,\, E_{r-1}) \,\ge\, (r+1).$$
	Since $g(C) \,\ge\, g(D) \,\ge\, 1$, this contradicts the following result.

\textbf{\underline{Claim}:}\, Let $L$ be a degree $r$ line bundle on a smooth, projective curve
$C$ with genus  $ g \,\ge\, 1$. Then $\dim H^0(C,\, L)\, \le\,  r$.
	\begin{proof}[Proof of the claim]
		
\textbf{\underline{Case 1}:}\, Assume that $r \,>\, 2g -2$. Then by Serre duality, we have $H^1(C,\, L)
\,\cong\, H^0(C,\, L^{-1} \otimes K_C) \,=\,0$. Thus, by Riemann-Roch we have
$$h^0(C,\, L)\ =\ deg(L)+(1-g)\ =\ r+1-g\ \le\ r.$$
		
		\textbf{\underline{Case 2}:}\, Assume that $0 \,\le\, r \,\le\, 2g-2$.  In this case, if
$H^1(C,\, L)\,=\,0$, then by the argument in Case 1, we have $\dim H^0(C,\, L) \,\le\, r$.

Assume that $H^1(C,\, L) \,\ne\, 0$. Then by Clifford's theorem (see \cite[p.~343, Theorem 5.4]{Har77}), we have
		\begin{equation} \label{clifford}
\dim H^0(C,\, L)\ \le\ \frac{r}{2}+1.
		\end{equation}
Since $r \,\ge\, 2$, we have $r+2 \,\le\, 2r$, so $1 + \frac{r}{2} \,\le\, r$. Thus, by \eqref{clifford}, we get
that $\dim H^0(C,\, L) \,\le\, r$. This completes the proof of the claim.
\end{proof} 
Therefore, there is no \rul vector bundle on $C$ \wrt $\pi$.
\end{example}

The following example provides a construction of a ramified cyclic Galois covering map
admitting a \rul line bundle.

\begin{example}
This example is inspired by \cite[Section 5]{parameswaran2024ulrich}. Let $f$ be a
homogeneous polynomial in $k[X,\, Y,\, Z]$ of degree $d \cdot\delta \, \in\, 2{\mathbb Z}$.
By \cite[Theorem 5.1]{carlini2008complete}, $f$ can be expressed as $f\,=\,F_1G_1+F_2G_2$, where
$F_1,\, F_2,\, G_1,\, G_2$ are homogeneous with $F_1$ being of degree $\delta$ while $F_2$ and
$G_2$ are of degree $\frac{d\cdot\delta}{2}$. Further, we assume that $F_1$
defines a smooth curve $Z(F_1)\,=\, D$ in $\bP^2$ that intersects both
$Z(F_2)$ and $Z(G_2)$ transversally. This implies that $D \bigcap Z(F_2 \cdot G_2)$ consists of
distinct $d \cdot\delta^2$ points of $D$. Consider the degree $d$ cyclic covering $\pi\,:\, C
\,\longrightarrow\, D$ branched over these $d \cdot\delta^2$ points in $|{\cO_{\bP^2}(d\cdot\delta)}\big\vert_D|$.
Restrict $F_2, G_2$ to $F'_2, G'_2$ in $H^0(D,\, {\cO_{\bP^2}(\frac{d \cdot\delta}{2})}\big\vert_D)$, and define the map 
 	\[\begin{tikzcd}
 		D\, && \, {\mathbb{P}^1}
 		\arrow["{|F_2', G_2'|}", from=1-1, to=1-3]
 	\end{tikzcd}\]
for which the inverse image of $\{0,\, \infty\}$ is precisely the distinct $d \cdot\delta^2$ branch points.
Let $\pi'\,:\, \bP^1 \,\longrightarrow\, \bP^1$ be the degree $d$ cyclic covering map
$(x,\, y)\, \longmapsto\, (x^d,\, y^d)$ that is branched over $\{0,\, \infty\}$.
Consider the following Cartesian diagram:
 	\[\begin{tikzcd}
 		{D \times_{\mathbb{P}^1} \mathbb{P}^1} && {\mathbb{P}^1} \\
 		D && {\mathbb{P}^1}
 		\arrow["j", from=1-1, to=1-3]
 		\arrow["{\pi''}"', from=1-1, to=2-1]
 		\arrow["{\pi'}"', from=1-3, to=2-3]
 		\arrow["{|F_2', G_2'|}", from=2-1, to=2-3]
 	\end{tikzcd}\]
{}From the uniqueness of cyclic coverings with a fixed branch divisor
it follows that ${D \times_{\mathbb{P}^1} \mathbb{P}^1} \,\cong\, C$ with $\pi''\,=\,\pi$.
Since $\cO_{\bP^1}(d-1)$ is a \rul line bundle for $\pi'$, by base change theorem we
conclude that $j^*\cO_{\bP^1}(d-1)$ on $C$ is a \rul line bundle \wrt $\pi$.
 \end{example}

Recall that a Noetherian ring $R$ is said to satisfy condition $S_2$ if, for every prime ideal $\mathfrak p
\,\subset\, R$, we have $\text{depth}\, (R_{\mathfrak p}) \,\ge\, \min \{2,\,\dim R_{\mathfrak p}\}$. A variety
$Y$ is said to satisfy the condition $S_2$ if every stalk $\cO_{Y, y}$ of it satisfies $S_2$. By Serre's criterion, a variety is normal if and only if it satisfies properties $R_1$ and $S_2$,  where $R_1$ means that the local ring of the variety at every codimension $1$ point is regular.

\begin{proposition} \label{mostcrucialprop1}
 	Let $\pi\,:\,X \longrightarrow Y$ be a finite morphism of varieties. Let $E$ be a coherent sheaf on
$X$ such that $\pi_*E$ is a locally free sheaf of positive rank on $Y$. Then the following
three statements hold:
 	\begin{enumerate}
 		\item [(1)] The morphism $\pi$ is surjective, and $E$ is torsionfree. 
 		\item [(2)] If $X$ is normal and $Y$ satisfies condition $S_2$, then $E$ is a reflexive sheaf. 
 		\item [(3)] If  $X$ is smooth and $Y$ is Cohen-Macaulay, then $E$ is a vector bundle.
 	\end{enumerate}
 \end{proposition}
 
 \begin{proof}
 	Since $\pi_* E$ is a locally free sheaf on $Y$, it follows that $supp(\pi_*E)$ is $Y$. Using the given condition
    that $\pi$ is a finite map it is deduced that $\pi(supp(E))\,=\,supp(\pi_*E)\,=\,Y$. Consequently, we have $\pi(X)\,=\,Y$, and thus $\pi$ is surjective. 
 	
 	Let $T \,\subset\, E$ be the torsion subsheaf of $E$. If $T \ne 0$, then $supp(T)$ is a proper closed subscheme of $X$. Since $\pi$ is finite, $\pi$ is a proper map. Moreover, since $X$ and $Y$ are integral, and $\pi$ is surjective, the image  $\pi(supp(T))$ is a proper closed subscheme of $Y$. Thus, $\pi_*T$ is a torsion sheaf on $Y$. Since $T \longhookrightarrow E$ is injective, we have $\pi_*T \longhookrightarrow \pi_*E$ is injective. Since $\pi_* E$ is locally free, and hence the torsion sheaf $\pi_*T=0$. Since $\pi$ is finite, the functor $\pi_*$ is faithful on coherent sheaves. Therefore $T=0$. Thus $E$ is torsionfree. This proves statement (1).
 	
 	The proofs of statement (2) and statement (3) follow arguments similar to those in \cite[Proposition 4.6]{parameswaran2024ulrich}. Let $x\,=\,x_0$ be a point in $X$ and $\pi(x)\,=\,y$ be its image in $Y$. Let $\widehat{\cO_{Y, y}} \,\,\longrightarrow\,\, \widehat{\cO_{X, x_i}}$ be the finite map of complete local rings induced by the structure map, where $x_i \,\in\, \pi^{-1} \{y\}$. Since $\pi$ is finite, after completing at $y$, we get the following isomorphism of $\widehat{\cO_{Y, y}}$ modules
 	$$\widehat{(\pi_*E)_y}\,\cong\, \oplus_{x_i \in \pi^{-1} \{y\}} \widehat{E_{x_i}}.$$
 	As $(\pi_*E)_y$ is a free $\cO_{Y, y}$ module, the completion $\widehat{(\pi_*E)_y}$ is a free $\widehat{\cO_{Y, y}}$ module. Given that $\widehat{E_{x_i}}$ are direct summands of a free module, it is deduced that $\widehat{E_{x_i}}$ is a free $\widehat{\cO_{Y, y}}$ module for each $x_i \,\in\, \pi^{-1} \{y\}$. Thus, the projective dimension of $\widehat{E_{x_i}}$  as an $\widehat{\cO_{Y, y}}$ module is zero. Thus, by the Auslander-Bucshbaum theorem, we have 
 	\begin{equation} \label{firstdee1}
 		\text{depth}_{\widehat{\cO_{Y, y}}} (\widehat{E_{x_i}})\ =\ \text{depth}\, (\widehat{\cO_{Y, y}})\,\,\ge\,\, \min \{2, \,\dim \cO_{Y, y}\},
 	\end{equation}
 	 because $Y$ satisfies $S_2$. Since $\widehat{\cO_{Y, y}} \longrightarrow \widehat{\cO_{X, x_i}}$ is a finite map, the $\text{depth} (\widehat{E_{x_i}})$ is the same whether it is considered as a module over $\widehat{\cO_{Y, y}}$ or over  $\widehat{\cO_{X, x_i}}$; see \cite[\href{https://stacks.math.columbia.edu/tag/0AUK}{Lemma 0AUK}]{stacks-project}. Since $\pi$ is finite and surjective, we have $\dim (\cO_{Y, y})\,=\,\dim (\cO_{X, x_i})$. Therefore, we get
 	\begin{equation} \label{depthe1}
 		\text{depth}_{\widehat{\cO_{X, x_i}}} (\widehat{E_{x_i}})\, \ge \,\min \{2, \dim \cO_{Y, y}\}\,=\,\min \{2, \dim \cO_{X, x_i}\}.
 	\end{equation}
 	Taking $x=x_0$, and using preservation of depth under completion, we obtain $\text{depth}_{\cO_{X, x}} (E_{x})\, \ge \,\min \{2, \dim \cO_{X, x}\}$ for all points $x$ in $X$. Since $E$ is torsionfree by $(1)$, we have $\dim (supp(E_x))\,=\,\dim \cO_{X, x}$. Hence $E$ satisfies property $S_2$ \cite[\href{https://stacks.math.columbia.edu/tag/0340}{Section 0340}]{stacks-project}. 
 	Since $E$ is torsionfree and satisfies $S_2$, and $X$ is normal, we have $E$ is a reflexive sheaf on $X$ \cite[Lemma 31.13.13, \href{https://stacks.math.columbia.edu/tag/0AVT}{Section 0AVT}]{stacks-project}. 
 	This proves $(2)$.
 	
 	For $(3)$, we assume $X$ is smooth and $Y$ is Cohen-Macaulay. Since $Y$ is Cohen-Macaulay, $\text{depth}\, (\widehat{\cO_{Y, y}})\,=\,\dim \cO_{Y, y}$. Thus, by \eqref{firstdee1}, we have $\text{depth}_{\widehat{\cO_{Y, y}}} (\widehat{E_{x_i}})\,=\,\dim \cO_{Y, y}$. The same depth comparison as for $(2)$ then gives
 	\begin{equation} \label{thirddee2}
 		\text{depth}_{\widehat{\cO_{X, x}}} (\widehat{E_{x}})\,=\,\dim \cO_{X, x},
 	\end{equation} 
 	 for all points $x$ in $X$. Since $X$ is smooth, the projective dimension $\text{pd}_{ \cO_{X, x}} (E_{x})$ is finite. Using \eqref{thirddee2} and the Auslander-Buchsbaum formula, it follows that $\text{pd}_{ \cO_{X, x}} (E_{x})=0$ for all $x \in X$. Thus, $E_{x}$ is a free $\cO_{X, x}$ module for all $x \in X$, and hence, $E$ is locally free. This proves $(3)$.
 	\end{proof}

\section{Necessary conditions for relatively Ulrich sheaves} \label{section2}

Let $\pi\,:\,X\, \longrightarrow\, Y$ be a finite surjective morphism of projective varieties. 
Then $\pi$ is called an abelian Galois covering map if a finite abelian group $G$ acts faithfully on $X$
such that $Y\,=\, X/G$ with $\pi$ being the quotient map. The group $G$ is called the Galois group
of $\pi$.

Assume that $\pi$ is an abelian Galois covering map, with Galois group $G$.
Since $G$ is finite abelian, there exists a chain of subgroups
$$\{1\}\ =\ G_0\ \subset\ G_1\ \subset\ \cdots\ \subset\ G_l\ =\ G$$ such that each quotient $G_i/G_{i-1}$ is
cyclic group of order $d_i$. Setting $X_i\,:=\,X/G_i$, we obtain a factorization
\[\begin{tikzcd}
	{X=X_0} & {X_1} & {X_2} & \cdots & {X_{l-1}} & {X_{l}=Y}
	\arrow["{\pi_1}", from=1-1, to=1-2]
	\arrow["{\pi_2}", from=1-2, to=1-3]
	\arrow["{\pi_3}", from=1-3, to=1-4]
	\arrow[from=1-4, to=1-5]
	\arrow["{\pi_l}", from=1-5, to=1-6],
\end{tikzcd}\]	
where, for each $1 \,\le\, i \,\le\, l$, the morphism $\pi_i\,:\,X_{i-1}\, \longrightarrow\, X_i$ is a cyclic
Galois covering map of degree $d_i$. Thus, $\pi$ factors as a tower of finitely many cyclic Galois covers of
degrees $d_1$, $d_2$, $\cdots$, $d_l$; namely, 
$$\pi\ =\ \pi_l \circ \pi_{l-1} \circ \cdots \circ \pi_2 \circ \pi_1.$$
 The cyclic groups $Gi/G_{i-1}$ are identified with $\bZ/d_i \bZ$. For each $1 \,\le\, i
\,\le\, l$, the action of $Gi/G_{i-1}$ on $X_{i-1}$ induces the following decomposition 
$$(\pi_i)_* \cO_{X_{i-1}} \ \cong\  \bigoplus_{\chi \in (\bZ/d_i \bZ)^*} \cF_{\chi},\
\,\,\,\,\cF_1\,=\,\cO_{X_i},$$
where $(\bZ/d_i \bZ)^*$ denotes the character group, $1$ denotes the trivial character, and
$\cF_{\chi}$ are rank $1$ torsionfree sheaves on $X_i$ \cite[\S~1]{alexeevabelian}.

\begin{remark}
By \cite[Lemma 1.1]{alexeevabelian}, the sheaf $\cO_{X_{i-1}}$ is $S_2$ if and only if every $\cF_{\chi}$ is 
$S_2$. Assume that $X_{i-1}$ is $S_2$ and $X_i$ is normal. Since $\cF_{\chi}$ is torsionfree and $S_2$, it 
follows that $\cF_{\chi}$ is reflexive \cite[\href{https://stacks.math.columbia.edu/tag/0AVT}{Section 0AVT}, 
Lemma 31.13.13]{stacks-project}. Since reflexive rank $1$ sheaves on a normal variety are divisorial, we have 
$\cF_{\chi} \cong \cO_{X_i}(D)$ for some Weil divisor $D$ on $X_i$. If, moreover, $X_i$ is smooth, then $D$ 
is a Cartier divisor, and thus, $\cF_{\chi}$ are line bundles on $X_i$. Therefore, in particular, if 
$X_{i-1}$ is normal and $X_i$ is smooth, then $\cF_{\chi}$ are line bundles. Hence $(\pi_i)_* \cO_{X_{i-1}}$ 
is a vector bundle, and consequently $\pi_{i}$ is a flat morphism.
\end{remark}

\begin{definition} [Standard abelian cover] \label{standardabelian}
The morphism $\pi_i\,:\,X_{i-1} \,\longrightarrow\, X_i$ is called a standard cyclic cover of degree
$d_i$ if it  is a ramified cyclic cover and
$$ (\pi_i)_* \cO_{X_{i-1}} \,\cong\, \cO_{X_i} \oplus N_i^{-1} \oplus N_i^{-2} \oplus \cdots \oplus N_i^{-d_i+1}$$
for some line bundle $N_i$ on $X_{i}$. The morphism $\pi\,:\,X \,\longrightarrow\, Y$ is called a standard
abelian Galois cover if the morphism $\pi_i$ is a standard cyclic cover for every $1 \,\le\, i \,\le\, l$.
\end{definition}

\begin{remark} \label{standardflat}
A standard abelian Galois cover may be viewed as a tower of standard cyclic covers.  Note that if 
$\pi_i\,:\,X_{i-1}\, \longrightarrow\, X_i$ is a standard cyclic cover, then $(\pi_i)_* \cO_{X_{i-1}}$ is vector 
bundle. Hence $\pi_i$ is flat. Therefore the composition $\pi\,=\,\pi_l \circ \cdots\circ \pi_1\,:\,X 
\,\longrightarrow\, Y$ is also flat. Thus every standard abelian cover is flat.
\end{remark}

The branch divisor $B_i$ of $\pi_i$ is the zero locus $Z(b_i)$ of a nonzero section $b_i \,\in\, H^0(X_i, 
\, N_i^{\otimes d_i})$. The pair $(b_i,\, N_i)$, where $0 \,\ne\, b_i \,\in\, H^0(X_i,\, N_i^{\otimes d_i})$,
uniquely  determines --- up to isomorphism --- the standard cyclic cover $\pi_i\,:\,X_{i-1}
\,\longrightarrow\, X_i$.

For each $1 \,\le\, i \,\le\, (l-2)$, let $$\pi_{i+1}'\,:=\,( \pi_l \circ \pi_{l-1} \circ \ldots \circ \pi_{i+2} \circ 
\pi_{i+1} )\,:\, X_i \,\longrightarrow\, Y$$ denote the composition of the morphisms. We have
the diagram:
\footnotesize
\begin{equation} \label{keydiag10}
	\begin{tikzcd}
		X && {X_{i-1}} && {X_i} && Y
		\arrow[from=1-1, to=1-3]
		\arrow["\pi", bend left=35, from=1-1, to=1-7]
		\arrow["{\pi_i}", from=1-3, to=1-5]
		\arrow["{\pi_i'}"', bend right=35, from=1-3, to=1-7]
		\arrow["{\pi_{i+1}'}", from=1-5, to=1-7]
	\end{tikzcd}
\end{equation}
\normalsize

\begin{definition}[{Admissible abelian Galois cover}]\label{admissibleab}
An admissible abelian Galois cover $\pi\,:\,X \,\longrightarrow\, Y$ is a standard abelian Galois
cover such that, for each $1 \,\le\, i \,\le\, l$, the line bundle $N_i$ on $X_i$ descends to $Y$; in
other words, there exists a line bundle $M_i$ on $Y$ such that $N_i \,\cong\, (\pi_{i+1}')^* M_i$.
Moreover, the branch section $b_i \,\in\, H^0(X_i,\, N_i^{\otimes d_i})$ is the pullback of a section
on $Y$, meaning $b_i\,=\, (\pi_{i+1}')^*s_i$ for some $s_i \,\in\, H^0(Y,\,M_i^{\otimes d_i})$.
\end{definition}

\begin{remark}
	It follows that every admissible abelian Galois cover $\pi:X \longrightarrow Y$ is flat by Remark \ref{standardflat}. Also note that every standard cyclic Galois cover is an admissible abelian Galois cover.
\end{remark}

We denote the branch divisors $Z(s_i)$ on $Y$ by $B_i'$.  
\begin{example}
	We consider the tuple
	\begin{equation}\label{a2}
		(B_1',\, B_2',\, \cdots,\, B_l') \ \in\ |M_1^{\otimes d_1}| \times |M_2^{\otimes d_2}| \times \cdots
		\times |M_l^{\otimes d_l}|,
	\end{equation}
	where $M_1,\, M_2,\, \cdots,\, M_l$ are line bundles on $Y$. Then an admissible abelian Galois cover $\pi$ can be constructed by successively composing the standard cyclic covers determined by the pairs $(B_i\,=\,\pi_{i+1}'^*B_i', N_i\,=\,\pi_{i+1}'^* M_i)$.
\end{example}

For an admissible abelian Galois covering map $\pi\,:\,X \,\longrightarrow\, Y$ between projective varieties, 
we prove that the existence of \rul vector bundles on $X$ \wrt $\pi$ implies that the line bundles $M_i$ (see 
Definition \ref{admissibleab}) are globally generated. Further, we prove that their existence implies that 
the branch divisor $s_i \,\in\, H^0(Y,\, M_i^{\otimes d_i})$ is in the image of the $d_i$-fold multiplication 
map.

Let
\begin{equation}\label{a1}
\pi\ :\ X \ \longrightarrow\ Y
\end{equation}
be an admissible abelian Galois covering map of degree $d$ between 
projective varieties of dimension $n$. 

The sheaf
$\pi_* {\cO_X}$ splits as direct sum of line bundles
\begin{equation}\label{a3}
\pi_*{\cO_X}\, \cong\, \cO_Y \oplus \bigg(M_1^{-1} \oplus M_1^{-2} \oplus \cdots \oplus M_1^{-d_1+1}\oplus
M_2^{-1} \oplus M_2^{-2} \oplus \cdots \oplus M_2^{-d_2+1}
\end{equation}
$$
\oplus \cdots \cdots \oplus M_l^{-1} \oplus M_l^{-2} \oplus \cdots \oplus M_l^{-d_l+1}\bigg)
\oplus \bigg(\sum_{1 \le i <j }^{j \le l}\sum_{k_i, k_j=(-1, -1)}^{k_i, k_j=(-d_i+1, -d_j+1)}
M_i^{k_i} \otimes M_j^{\otimes k_j} \bigg)
$$
$$
 \oplus \cdots \oplus (M_1^{-d_1+1} \otimes M_2^{-d_2+1} \otimes \cdots \otimes M_l^{-d_l+1}).$$

For the remainder of this section $X$, $Y$ and $X_i$ are assumed to be projective varieties, unless otherwise stated.

\begin{lemma} \label{torsionfreeinjective}
	Let $L$ be a line bundle on $X$, and let $\cO_X\, \longhookrightarrow\, L$ be the morphism defined by a nonzero section of $L$. For any torsionfree sheaf  $E$ on $X$, tensoring with $E$ induces an injective morphism $E \longhookrightarrow E \otimes L$.
\end{lemma}

\begin{proof}
We choose an affine open cover $\{U_i\,=\,\spec (A_i)\}$ of $X$ that trivializes the line bundle $L$. Let $s
\,\ne\, 0$ be the given nonzero section. Under a trivialization $L \vert_{U_i} \,\cong\, \cO_{U_i}$, the
section $s$ corresponds to an element $f_i \,\in\, A_i$. Thus, locally on $U_i$, the morphism $\cO_X
\,\longhookrightarrow\, L$ is given by $1\, \longrightarrow\, f_i$. Since $X$ is integral and $s\,\ne\, 0$, the
element $f_i$ is nonzero in $A_i$. On $U_i$, the coherent sheaf $E$ is represented by an $A_i$ module
$P_i$. Under the chosen trivialization of $L\big\vert_{U_i}$, the morphism $E \,\longhookrightarrow\,
E \otimes L$ is given locally by $$P_i \ \longrightarrow\ P_i \otimes_{A_i} A_i\ \cong\ P_i, \ \ \,
a\ \longmapsto\ f_i \cdot a.$$
Since $E$ is torsionfree, $P_i$ is a torsionfree $A_i$ module. Thus, as $f_i \,\ne\, 0$, the equality
$f_i \cdot a\,=\,0$ implies that $a\,=\,0$. Thus, the morphism $E \,\longrightarrow \,E \otimes L$ is
injective locally on each $U_i$. Therefore $E \,\longrightarrow\, E \otimes L$ is injective.
\end{proof}

\begin{proposition} \label{effectivelinebundle}
Let $E$ be a \rul coherent sheaf on the admissible abelian Galois  cover $X$ \wrt $\pi$ in Definition \ref{admissibleab}. Then $H^0(Y,\, M_i)
\,\ne\, 0$ for all $i\,=\,1,\, 2,\, \cdots,\, l$.
\end{proposition}

\begin{proof}
Since $\pi\,:\,X \,\longrightarrow\, Y$ is an admissible abelian Galois, the map $\pi_i\,:\,X_{i-1}
\,\longrightarrow\, X_i$ is a standard cyclic cover for each $1\, \le\, i \,\le\, l$. Therefore, by 
construction, the tautological section defines a nonzero section $0 \,\ne\, t_i \,\in\, H^0(X_{i-1},\,
\pi_i'^*M_i)$ whose  $d_i$-{th} power recovers pullback of the branch section $b_i\,=\,\pi_{i+1}'^* s_i$
defining $B_i\,=\,\pi_{i+1}'^*B_i'$.
Thus, we get that $$H^0(X,\, \pi^*M_i) \ \ne\ 0$$
for all $1\, \leq\, i\, \leq\, l$ (see the diagram \eqref{keydiag10}). A nonzero section of $\pi^*M_i$
defines an injective map 
\begin{equation} \label{injectivee1}
	\cO_X\ \longrightarrow\ \pi^*M_i .
\end{equation}
By Proposition \ref{mostcrucialprop1}, the relatively Ulrich sheaf $E$ is a torsionfree sheaf on $X$. Thus, by Lemma \ref{torsionfreeinjective}, tensoring \eqref{injectivee1} with $E$ on $X$ gives an injective morphism
$$E\ \longhookrightarrow\ E \otimes \pi^*M_i .$$
By applying $\pi_*$, and using the projection formula, we get an injective map
$$\cO_Y^{\oplus d \cdot \text{rank}(E)}\ \longhookrightarrow\ M_i \otimes (\cO_Y^{\oplus d \cdot \text{rank}(E)} ),$$
where $d\,=\, \text{degree}(\pi)$. This implies that
$$
H^0(Y,\, \cO_Y)^{\oplus d \cdot \text{rank}(E)}\,=\,
H^0(Y,\, \cO_Y^{\oplus d \cdot \text{rank}(E)}) \, \longhookrightarrow\, H^0(Y,\,
M_i \otimes (\cO_Y^{\oplus d \cdot \text{rank}(E)}))\,=\, H^0(Y,\, M_i)^{\oplus d \cdot \text{rank}(E)},
$$
which in turn shows that $H^0(Y,\, M_i)\, \not=\, 0$.
\end{proof}

\begin{remark}
Proposition \ref{effectivelinebundle} may not hold in general for arbitrary abelian covers. One may also construct an abelian covering by considering the composition of two degree $2$ cyclic coverings 
$\pi_1\,:\, X_1\,\longrightarrow\, Y$ and $\pi_2\,:\, X \,\longrightarrow\, X_1$ such that
$(\pi_1^*B_1) \cap B_2\,=\,\emptyset$, where $B_1 \,\in\, |L_1^{\otimes 2}|$ is the branch divisor of $\pi_1$
for a line bundle $L_1$ on $Y$ while $B_2\,\in\, |L_2^{\otimes 2}|$ is the branch divisor of $\pi_2$ for a
line bundle $L_2$ on $X_1$. We assume that $L_2$ is not the pullback of some line bundle on $Y$. Then the resulting cover $\pi\,=\,\pi_1\circ \pi_2\,:\, X \,\longrightarrow\,
Y$ is a standard abelian Galois cover but not an admissible abelian cover. If there
exists a \rul coherent sheaf on $X$ \wrt the covering map $\pi\,:\, X \,\longrightarrow\,
Y$, then by the same argument as in Proposition \ref{effectivelinebundle}, we obtain $H^0(Y,\, L_1) \,\ne \,0$. However, in general, the same argument does not imply that $H^0(X_1,\, L_2) \,\ne \,0$ .
\end{remark}

For the following proposition, we are assuming that $\pi\,:\,X \,\longrightarrow\, Y$ is
a standard cyclic Galois covering map of degree $d:\,=\, d_1$ between projective varieties. Thus
\begin{equation}\label{a4}
\pi_*{\cO_X}\ \cong\ \cO_Y \oplus M^{-1} \oplus M^{-2} \oplus \cdots \oplus M^{1-d},
\end{equation}
where $M:\, =\, M_1$ is a line bundle on $Y$ (see Definition \ref{standardabelian}). Thus $\pi$ is flat.
The branch divisor is given by $B \,\in\, |M^{\otimes d}|$. 

\begin{proposition}
\label{ggcyclic}
Let $E$ be a \rul vector bundle on $X$ \wrt $\pi$. We assume that the divisor $B$ is reduced. Then the line bundle $M$ (see \eqref{a4})
is generated by its global sections.
\end{proposition}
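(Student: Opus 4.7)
My plan is to exploit the well-known matrix factorization description of vector bundles on a cyclic covering. Since $\pi_*\cO_X \,=\, \bigoplus_{i=0}^{d-1} M^{-i}$ is generated as an $\cO_Y$-algebra by its $M^{-1}$-summand modulo the relation $t^d = s$ — where $s \,\in\, H^0(Y, M^{\otimes d})$ denotes the section cutting out the branch divisor $B$ and $t$ denotes the tautological generator of $M^{-1}$ — giving a coherent sheaf on $X$ is equivalent to giving a coherent sheaf $F$ on $Y$ together with an $\cO_Y$-linear map $\psi\,:\, F \to F \otimes M$ satisfying $\psi^d \,:=\, (\psi \otimes \id_{M^{\otimes (d-1)}}) \circ \cdots \circ (\psi \otimes \id_M) \circ \psi \,=\, s \cdot \id_F$. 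Vector bundles on $X$ correspond to those pairs $(F,\psi)$ whose associated $\pi_*\cO_X$-module is locally free over $\cO_X$, and one has $\pi_* E = F$. Applying this to our relatively Ulrich bundle $E$, we obtain $F \,=\, \cO_Y^{\oplus N}$ with $N \,=\, d \cdot \mathrm{rank}(E)$, and consequently $\psi \,\in\, \Hom(\cO_Y^{\oplus N}, M^{\oplus N})$ is encoded by an $N \times N$ matrix whose entries are global sections of $M$.

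Next I would reduce the global generation of $M$ to the assertion that the induced fiber map $\psi(y)\,:\, k(y)^{\oplus N} \to (M|_y)^{\oplus N}$ is nonzero for every $y \,\in\, Y$: nonvanishing of $\psi(y)$ forces at least one matrix entry of $\psi$ — by construction a global section of $M$ — to be nonzero at $y$. At a point $y \,\notin\, B$ the claim is immediate, because the matrix factorization relation gives $\psi(y)^d \,=\, s(y) \cdot \id \,\neq\, 0$, and hence $\psi(y) \,\neq\, 0$.

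The main obstacle, and where the full strength of $E$ being a vector bundle is used, is the case $y \,\in\, B$: there $s(y) = 0$, so the matrix factorization relation alone only tells us that $\psi(y)$ is nilpotent, not that it is nonzero. To rule out $\psi(y) \,=\, 0$ I would invoke the local freeness of $E$ as an $\cO_X$-module. Since $B$ is smooth at $y$ and $\pi$ is cyclic of degree $d$, the scheme-theoretic fiber $\pi^{-1}(y)$ is isomorphic to $\mathrm{Spec}(k[t]/t^d)$, and the restriction $E|_{\pi^{-1}(y)}$ is a free $k[t]/t^d$-module of rank $r \,:=\, N/d \,=\, \mathrm{rank}(E)$. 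The action of $t$ on $E|_{\pi^{-1}(y)}$ coincides, after any local trivialization of $M$ in a neighborhood of $y$, with the fiber map $\psi(y)$. On a free $k[t]/t^d$-module of rank $r$, multiplication by $t$ is conjugate over $k$ to the direct sum of $r$ copies of the $d \times d$ nilpotent Jordan block $J_d$; since the covering is ramified we have $d \,\geq\, 2$, so $J_d \,\neq\, 0$ and therefore $\psi(y) \,\neq\, 0$. Combining the two cases, $\psi(y)$ is nonzero at every point of $Y$, so $M$ is generated by its global sections.
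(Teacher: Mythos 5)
Your proof is correct, but it takes a genuinely different route from the paper's. The paper works upstairs on $X$: it produces the tautological section $t \,\in\, H^0(X,\, \pi^*M)$ with $\text{Div}(t^d)\,=\,d\cdot(\pi^*B)_{\rm red}$, deduces that $t$ restricts nontrivially to every scheme-theoretic fiber $Z$, tensors with $E$ and uses that $E$ is globally generated (because $\pi_*E$ is trivial) to keep the map on global sections nonzero, and finally pushes down by the projection formula to see that $H^0(Y,\,M)\,\longrightarrow\, M_P$ is nonzero. You instead work downstairs on $Y$, encoding $E$ as the pair $(\cO_Y^{\oplus N},\,\psi)$ with $\psi^d\,=\,s\cdot\id$ --- exactly the device the paper deploys only later, in the proof of Proposition \ref{dfoldsurj} --- and you show the fiber matrix $\psi(y)$ is nonzero everywhere: off $B$ because $\psi(y)^d\,=\,s(y)\cdot\id\,\ne\,0$, and on $B$ by identifying $\psi(y)$ with multiplication by $t$ on the free $k[t]/t^d$-module $E|_{\pi^{-1}(y)}$, which is a direct sum of nilpotent Jordan blocks of size $d\,\ge\,2$ and hence nonzero. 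Your version buys a cleaner logical structure (Propositions \ref{ggcyclic} and \ref{dfoldsurj} become two consequences of one matrix description, and the discussion of $\text{Div}(t^d)$ and of global generation of $E$ is replaced by elementary linear algebra over $k[t]/t^d$); the one point you should make explicit is the base-change identification $(\pi_*E)\otimes k(y)\,\cong\, H^0(\pi^{-1}(y),\, E|_{\pi^{-1}(y)})$ that lets you read $\psi(y)$ as the $t$-action on the fiber, which holds here because $\pi$ is finite (so $\pi_*$ is exact) and $\pi_*E$ is locally free. The paper's fiber-restriction formulation is the one that is reused verbatim in the abelian (non-cyclic) corollary that follows, but your argument adapts to that setting just as well.
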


\begin{proof}
Let $P \,\in\, Y$ be a closed point. Consider the following Cartesian diagram
\[\begin{tikzcd}
Z && X \\
\spec(k) && Y
\arrow[hook, from=1-1, to=1-3]
\arrow["{\pi'}"', from=1-1, to=2-1]
\arrow["\pi", from=1-3, to=2-3]
\arrow["P", hook, from=2-1, to=2-3]
\end{tikzcd}\]
where $Z$ is a scheme of distinct $d$ points if $P \,\not\in\, B$, and $Z \,\cong\,
\text{spec}(\frac{K[x]}{x^d})$ if $P \,\in\, B$. Consider the restriction map 
\begin{equation}\label{a6}
\pi^*M\ \longrightarrow\ {\pi^*M}\big\vert_Z .
\end{equation}
It will be shown that the induced map of cohomologies
\begin{equation}\label{a7}
H^0(X,\, \pi^*M) \, \longrightarrow\, H^0(X,\, {\pi^*M}\big\vert_Z)\,
=\, H^0(Z,\, {\pi^*M}\big\vert_Z)
\end{equation}
is nonzero.

To see this, from the construction of $X$ it follows that there exist $t \,\in\, H^0(X,\, \pi^*M)$ such that
\begin{equation}\label{a5}
\text{Div}(t^d)\ =\ \pi^*B\ =\, d \cdot (\pi^* B)_{\text{red}},
\end{equation}
since $B$ is reduced, where $(\pi^* B)_{\text{red}}$ is the reduced induced scheme structure on $\pi^*B$. Also, with a mild abuse of 
notation, the preimage of a point $Q \,\in\, B$ in $(\pi^* B)_{\text{red}}$ will be denoted by $Q$. So $\pi^*Q 
\, = \, d \cdot Q$.

The section $t$ restricts to a nonzero element in
$H^0(Z,\, {\pi^*M}\big\vert_Z)$, because if $P\,=\,P_i$, and $t$ maps to zero,
then $dP_i$ is in the support of $t$, and if $P \,\not\in\, B$, then $d$ distinct point outside the
ramification divisor will be in the support of $t$; note that neither of them is possible, since the divisor for
$t^d$ is $d \cdot (\pi^* B)_{\text{red}}$ (see \eqref{a5}). This proves the above statement
that the homomorphism in \eqref{a7} is nonzero.

Now tensoring \eqref{a6}
with the \rul vector bundle $E$, we get the restriction map
\begin{equation}\label{a8}
E \otimes{\pi^*M}\ \longrightarrow\ E \otimes {\pi^*M}\big\vert_Z.
\end{equation}
Since $E$ is locally free, $E \vert_{Z}$ is locally free over $\cO_Z$. The section $t \vert_{Z} \in H^0(\cO_Z, {\pi^*M}\big\vert_Z)$ is nonzero, hence multiplication by $t \vert_{Z}$ on $E \vert_{Z}$ is not the zero map. Since $E$ is globally generated,  some global sections of $E$ restricts to a section $E \vert_{Z}$ not annihilated by $t \vert_{Z}$. Thus, the induced map of global sections
\begin{equation} \label{a10}
H^0(X,\, E \otimes {\pi^*M})\ \longrightarrow\ H^0(Z,\, E \otimes {\pi^*M}\big\vert_Z)
\end{equation}
remains nonzero. 

Applying $\pi_*$ to \eqref{a8}, and using the projection formula, we have
\begin{equation}\label{a9}
M \otimes \cO_Y^{\oplus d \cdot \text{rank}(E)}\ \longrightarrow\ M_{|P}^{\oplus d \cdot \text{rank}(E)}.
\end{equation}
The induced map on global sections
$$
H^0(Y,\, M \otimes \cO_Y^{\oplus d \cdot \text{rank}(E)})\ \longrightarrow\ M_{|P}^{\oplus d \cdot \text{rank}(E)}
$$
for the homomorphism in \eqref{a9} remains nonzero because
the homomorphism in \eqref{a10} is nonzero. Consequently, the evaluation homomorphism
$H^0(Y,\,M)\, \longrightarrow M_P$ is nonzero. Since $\dim M_P\,=\,1$, this implies that the
evaluation homomorphism $H^0(Y,\,M)\, \longrightarrow M_P$ is surjective.
\end{proof}

\begin{remark}
In Proposition \ref{ggcyclic}, the reducedness of the branch divisor $B$ was used in deriving \eqref{a5}. 
This equation may fail if $B$ is not reduced. It was then used to show that the tautological section $t \in 
H^0(X,\, \pi^*M)$ restricts to a nonzero section in $ 0 \,\ne\, t \vert_{Z} \,\in\, H^0(\cO_Z,\, \pi^*M \vert_{Z})$.
	
However, one can still prove that the section $t$ restricts to a nonzero section in $ 0 \ne t \vert_{Z} \in 
H^0(\cO_Z,\, \pi^*M \vert_{Z})$ without assuming that $B$ is reduced. The alternative argument avoids the above use 
of \eqref{a5} to keep track of cycle theoretic multiplicities. We give this argument in Proposition 
\ref{admissiblegg}, where the divisors $B_i'$ on $Y$ are not assumed to be reduced.
\end{remark}

\begin{proposition} \label{admissiblegg}
Let $\pi\,:\,X \,\longrightarrow\, Y$ be an admissible abelian covering map of projective varieties. Let $E$ be
a \rul bundle on $X$. Then the line bundles $M_i$ are globally generated for all $i$.	
\end{proposition}

\begin{proof}
	The map $\pi\,:\,X \,\longrightarrow\, Y$ is a composition of standard cyclic coverings
	\[\begin{tikzcd}
		{X=X_0} & {X_1} & {X_2} & \ldots & {X_{l-1}} & {X_{l}=Y}
		\arrow["{\pi_1}", from=1-1, to=1-2]
		\arrow["{\pi_2}", from=1-2, to=1-3]
		\arrow["{\pi_3}", from=1-3, to=1-4]
		\arrow[from=1-4, to=1-5]
		\arrow["{\pi_l}", from=1-5, to=1-6]
	\end{tikzcd}\]	
	where $\pi_i\,:\,X_{i-1} \,\longrightarrow\, X_{i}$ is a standard cyclic covering map of degree $d_i$. For
each $1 \,\le\, i \,\le\, (l-1)$, let $\pi_{i+1}'$ be the map $( \pi_l  \circ \pi_{l-1} \circ
\ldots \circ \pi_{i+2} \circ \pi_{i+1} )\,: \,X_i\, \longrightarrow\, Y$. Thus, $\pi'_{i}$ is the map
$\pi_{i+1}' \circ \pi_i\,:\, X_{i-1} \,\longrightarrow\, Y$. The cyclic covering map
$\pi_i\,:\,X_{i-1} \,\longrightarrow\, X_i$ of degree $d_i$ is branched over $\pi_{i+1}'^* B_i'$ given by
a section $b_i$ in $H^0(X_i,\,\pi_{i+1}'^* M_i^{\otimes d_i})$. We take a point $P \,\in\, Y$, and let $Z''$ be
the scheme-theoretic fiber over $P$ for $\pi_i'\, : \, X_{i-1} \,\longrightarrow\, Y$.  
	\footnotesize
	\[\begin{tikzcd}
		{Z''} && {X_{i-1}} \\
		{Z'} && {X_i} \\
		\spec(k) && Y
		\arrow[hook, from=1-1, to=1-3]
		\arrow[from=1-1, to=2-1]
		\arrow["{\pi_i}"', from=1-3, to=2-3]
		\arrow[hook, from=2-1, to=2-3]
		\arrow[from=2-1, to=3-1]
		\arrow["{\pi_{i+1}'}"', from=2-3, to=3-3]
		\arrow["P", from=3-1, to=3-3]
		\arrow["\pi_{i}'", bend left=45, from=1-3, to=3-3]
	\end{tikzcd}\]
	\normalsize
	We consider the following restriction map
	\begin{equation}\label{gggeneral1}
		\pi_i'^* M_i\ =\ \pi_i^*(\pi_{i+1}'^*M_i) \ \longrightarrow\
{{\pi'_i}^* M_i}\big\vert_{Z''}\ =\ {\pi_i^*({\pi_{i+1}'}^*M_i)}\big\vert_{Z''}.
	\end{equation}
Let $L$ be the line bundle $\pi_{i+1}'^*M_i$. Locally on an affine open $U\, =\, \spec A \subset X_i$ on which 
$L$ is trivial, the standard cyclic cover $\pi_i$ has the form $\pi_i^{-1}(U)\,\cong\, \spec A[T]/(T^{d_i}-b_i 
\vert_{U})$, where $b_i \vert_{U} \,\in\, A$ is the restriction of the section $b_i$ defining $\pi_{i+1}'^* 
B_i'$. The tautological section $t$ is represented by $T$.
	
After restricting to $Z''$, we get $\mathcal O_{Z''\cap \pi_i^{-1}(U)}\, \cong\, R[T]/(T^{d_i}-\overline {b_i 
\vert_{U}})$, where $R\,:=\,\Gamma(U \cap Z',\,\cO_{Z'})$ is the coordinate ring of $U \cap Z'$ and $\overline 
{b_i \vert_{U}}$ is the restriction of $b_i \vert_{U}$ to $Z'$. The module $\mathcal O_{Z''\cap 
\pi_i^{-1}(U)}$ is a free $R$-module with basis $1,\,T,\,\cdots,\,T^{d_i-1}$. Hence, the restriction
$t\big\vert_{Z''}$ 
which is represented by $T$ is nonzero on every affine $Z''\cap \pi_i^{-1}(U)$. Thus, $t \,\in\, H^0(X_{i-1}, 
\, \pi_i'^* M_i)$ restricts to $ 0 \,\ne\, t\big\vert_{Z''}\, \in\, H^0(Z'',\, \pi_i'^* M_i \vert_{Z''})$.

Let $\sigma$ denote the composition $(\pi_{i-1} \circ \pi_{i-2} \circ \ldots \circ \pi_1)\,:\,X 
\,\longrightarrow\, X_{i-1}$. Since each $\pi_i$ is flat, the morphism $\sigma$ is flat. Hence $\sigma_*E$ is 
locally free on $X_{i-1}$. Since $E$ is a \rul bundle on $X$ \wrt $\pi$, the bundle $E':=\sigma_* E$ is \rul 
\wrt $\pi_i'$. Thus, tensoring \eqref{gggeneral1} with $E'$ and applying $(\pi_i')_*$, we get $H^0(M_i) 
\, \longrightarrow\, (M_i)_P$ is nonzero (for more details refer to the last paragraph of the proof of Proposition 
\ref{ggcyclic}).
\end{proof}

In the following proposition, we prove the crucial necessary criterion for the existence of \rul sheaves. 

\begin{proposition} \label{dfoldsurj}
Let $\pi\,:\, X \,\longrightarrow\, Y$ be an admissible abelian covering map of projective varieties. Let $E$
be a \rul coherent sheaf on $X$ \wrt
$\pi$. Then for all $1 \,\le\, i \,\le\, l$, the section $s_i \,\in\, H^0(Y,\, M_i^{\otimes d_i})$ defining
the branch divisor $B_i'$ on $Y$ lies in the image of the natural $d_i$-fold multiplication map
	$$H^0(Y,\, M_i)^{\otimes d_i}\ \longrightarrow\ H^0(Y,\, M_i^{\otimes d_i}).$$
\end{proposition}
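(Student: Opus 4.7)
The plan is to produce a square matrix $T$ of size $dr$ with entries in $H^0(Y, M_i)$ such that, with respect to the natural multiplication $H^0(Y, M_i)^{\otimes d_i} \to H^0(Y, M_i^{\otimes d_i})$, the identity $T^{d_i} = s_i \cdot I$ holds. Any diagonal entry of this identity then expresses $s_i$ as a sum of $d_i$-fold products of sections of $M_i$, which is exactly the image condition claimed.

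Fix $i$ with $1 \le i \le l$ and follow the factorization used in the corollary preceding the proposition. Write $\pi = \pi'' \circ (\pi_{i-1} \circ \cdots \circ \pi_1)$, where $\pi_i : X_{i-1} \to X_i$ is the $i$-th cyclic covering of degree $d_i$ and $\pi'' : X_{i-1} \to Y$ collects the remaining layers. The bundle $E' := (\pi_{i-1} \circ \cdots \circ \pi_1)_* E$ is locally free on $X_{i-1}$ and relatively Ulrich with respect to $\pi''$, since $\pi''_* E' = \pi_* E \cong \cO_Y^{\oplus dr}$, where $d$ is the degree of $\pi$ and $r$ the rank of $E$. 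The cyclic covering $\pi_i$, branched along $(\pi')^* B_i$ with $\pi' := \pi_l \circ \cdots \circ \pi_{i+1}$, furnishes a tautological section $t \in H^0(X_{i-1},\, (\pi'')^* M_i)$ satisfying $t^{d_i} = (\pi'')^* s_i$.

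The key computation is to analyze the $\cO_{X_{i-1}}$-linear multiplication $\cdot\, t : E' \to E' \otimes (\pi'')^* M_i$. Applying $\pi''_*$ together with the projection formula yields a map $\cO_Y^{\oplus dr} \to M_i \otimes \cO_Y^{\oplus dr}$ which, under fixed trivializations, is encoded by a $dr \times dr$ matrix $T$ with entries in $H^0(Y, M_i)$. By naturality of the projection formula, multiplication by $t$ on $E' \otimes (\pi'')^* M_i^{\otimes (k-1)}$ pushes forward to the map $T \otimes \id_{M_i^{\otimes(k-1)}}$ on the trivialization. Composing $d_i$ such maps realizes the pushforward of $\cdot\, t^{d_i} = \cdot\, (\pi'')^* s_i$, which in turn equals multiplication by $s_i$ on the trivial bundle $\cO_Y^{\oplus dr}$. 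In matrix form, this reads $T^{d_i} = s_i \cdot I$, with the left-hand product taken under the pairing $H^0(Y, M_i)^{\otimes d_i} \to H^0(Y, M_i^{\otimes d_i})$.

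Reading a diagonal entry of $T^{d_i} = s_i \cdot I$ then expresses $s_i$ as a sum of $d_i$-fold products of entries of $T$, each lying in $H^0(Y, M_i)$, placing $s_i$ in the image of the $d_i$-fold multiplication map. The most delicate step is the naturality assertion that $\pi''_*$ of multiplication by $t$ on $E' \otimes (\pi'')^* M_i^{\otimes(k-1)}$ really yields $T \otimes \id_{M_i^{\otimes(k-1)}}$, and that $\pi''_*(\cdot\,(\pi'')^* s_i) = s_i \cdot \id$; both are standard projection-formula computations, but getting them pinned down correctly is what ensures the iterated composition computes matrix multiplication under the intended pairing rather than a twisted variant.
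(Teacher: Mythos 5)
Your proposal is correct and is essentially the paper's own argument: the paper likewise uses the $\pi_*\cO_X$--module structure on $\pi_*E\cong\cO_Y^{\oplus m}$ to produce a matrix $A_i$ with entries in $H^0(Y,M_i)$ satisfying $A_i^{d_i}=s_i\cdot\id$, and reads off a diagonal entry; your matrix $T$, obtained by pushing forward multiplication by the tautological section $t$, is exactly this $A_i$. The only difference is presentational --- you make explicit, via the factorization through $X_{i-1}$ and the projection formula, the naturality that the paper leaves implicit in the phrase ``algebra structure.''
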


\begin{proof}
Let $E$ be a \rul coherent sheaf on $X$ \wrt $\pi$. Thus we have $\pi_* E \,\cong\, \cO_Y^{\oplus m}$ for 
some $m \,\in\, \bN$. It follows that $\End (\pi_*E)$ admits a $\pi_* \cO_X$ algebra structure. From the 
decomposition of $\pi_* \cO_X$ (see \eqref{a3}), this is equivalent to $\cO_Y$--module homomorphisms $$\phi_i\ :\ 
\cO_Y^{\oplus m} \ \longrightarrow\ \cO_Y^{\oplus m} \otimes M_i$$ that satisfies the characteristic 
equations $\phi_i^{d_i}\,=\,s_i \cdot \id_{\cO_Y^{\oplus m}}$ as homomorphisms $\cO_Y^{\oplus m}\,
\longrightarrow\, \cO_Y^{\oplus m} \otimes (M_i^{\otimes d_i})$
for every $1 \,\le\, i \,\le\, l$. Now, the morphism $\phi_i$ is given by a matrix 
$A_i$ of size $m$ whose entries are elements of $H^0(Y,\, M_i)$. The characteristic equation becomes 
$$A_i^{d_i} \ =\ s_i \cdot \id_{m \times m}.$$ This implies that $s_i$ can be written as $s_i\, =\, \sum_{k} a^k_1 \otimes a^k_2 
\otimes \cdots \otimes a^k_{d_i}$, where $a^k_j \,\in\, H^0(Y,\, M_i)$ for $1 \,\le\,j\,\le\,d_i$ and for all $k$.
\end{proof}

\section{Sufficient criterion for \rul sheaves}

In this section, for a standard abelian cover $\pi$, we will prove that the property of the branch divisors 
$s_i \,\in\, H^0(Y, \, M_i^{ \otimes d_i})$ is in the image of the $d_i$-fold multiplication map is sufficient 
for the existence of \rul sheaves. This criterion for the existence of \rul sheaves is related to the normal generation of line bundles. In this direction, we apply the main theorem of this section to construct interesting examples of abelian Galois covers admitting \rul sheaves.
We also show that this property of $M_i$ is stronger than $M_i$ being a 
globally generated line bundle. We also give an example showing these two properties are not equivalent. We also make some observations concerning the possible ranks of  \rul sheaves.  Moreover, we describe a relation between finite dimensional representations of certain generalized Clifford algebras and the existence of \rul sheaves.

The following proposition gives a sufficient condition for the existence of a rank $1$ \rul coherent sheaf on a double 
cover. The proof of the following proposition is simple and shows the idea of the proof for the general case.

\begin{proposition}\label{algstruc}
Take a line bundle $L\, \longrightarrow\, Y$ on a smooth projective variety $Y$ and a nonzero section
$s \,\in\, H^0(Y,\, L^{\otimes 2})$ such that
$s\,=\,a \otimes b$ for $a,\, b\, \in\, H^0(Y,\, L)$. Let $\pi\,:\, X \,\longrightarrow \, Y$ be the
corresponding double covering branched over the divisor of $s$. Then there exists a rank $1$ \rul coherent sheaf on
$X$ \wrt $\pi$.
\end{proposition}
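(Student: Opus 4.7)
My plan is to construct the desired line bundle $N$ via the equivalence between coherent sheaves on $X$ and coherent $\pi_*\cO_X$-modules on $Y$ (valid since $\pi$ is affine), paralleling the necessary-direction argument appearing in the proof of Proposition \ref{dfoldsurj}. Since $\pi$ is the double cover branched over $\text{Div}(s)$, one has $\pi_*\cO_X \,\cong\, \cO_Y \oplus L^{-1}$ as $\cO_Y$-algebras, where the multiplication $L^{-1} \otimes L^{-1}\,\to\, \cO_Y$ is the one determined by $s$. Consequently, specifying a $\pi_*\cO_X$-module structure on an $\cO_Y$-module $F$ is the same as specifying an $\cO_Y$-linear map $\phi\,:\, F\,\to\, F \otimes L$ satisfying $\phi^2\,=\, s\cdot \id_F$.

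I would take $F\,=\, \cO_Y^{\oplus 2}$ and define
$$
\phi\ :=\ \begin{pmatrix} 0 & a \\ b & 0 \end{pmatrix}\ :\ \cO_Y^{\oplus 2}\ \longrightarrow\ L^{\oplus 2}.
$$
The factorization $s\,=\, a\otimes b$ gives $\phi^2\,=\, s\cdot \id$, so $(\cO_Y^{\oplus 2},\, \phi)$ is a well-defined $\pi_*\cO_X$-module. The corresponding coherent sheaf $N$ on $X$ then satisfies $\pi_* N \,\cong\, \cO_Y^{\oplus 2}$ by construction, realizing the desired triviality of the direct image.

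The only delicate point, which I expect to be the main obstacle, is to verify that $N$ is actually a line bundle. Since $\pi$ is affine, this is equivalent to $\pi_* N$ being locally free of rank one over $\pi_*\cO_X$. Working locally on $Y$, trivialize $L$ by a local section $e$ and write $a\,=\, a'\cdot e$, $b\,=\, b'\cdot e$; then a local $\pi_*\cO_X$-generator of $\pi_* N$ is a vector $v\,=\, (\alpha,\, \beta)$ for which $v$ and $\phi(v)$ form a local $\cO_Y$-basis, equivalently, for which the determinant $\alpha^2 b' - \beta^2 a'$ is a local unit. Smoothness of $X$ (implicit in the definition of \rul bundle) forces the branch divisor $\text{Div}(s)\,=\, \text{Div}(a) + \text{Div}(b)$ to be smooth, and hence $\text{Div}(a) \cap \text{Div}(b) \,=\, \emptyset$. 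So at every point of $Y$, at least one of $a', b'$ is a local unit; taking $v\,=\, (1,\, 0)$ where $b'$ is a unit and $v\,=\, (0,\, 1)$ where $a'$ is a unit produces an open cover of $Y$ over which $\pi_*N$ is freely generated by one element, completing the argument.
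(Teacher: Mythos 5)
Your proof is correct, and it is the same underlying idea as the paper's --- namely the $2\times 2$ matrix factorization of $t^2 - ab$ coming from $\bigl(\begin{smallmatrix} t & a \\ b & t\end{smallmatrix}\bigr)$ --- but packaged differently. The paper works on the projective compactification $\bP = \mathrm{Proj}(\mathrm{Sym}(\cO_Y\oplus L^{-1}))$, realizes the sheaf as the cokernel of a map $\cO_{\bP}(-1)^{\oplus 2}\otimes\varpi^*L^{-1}\to\cO_{\bP}^{\oplus 2}$ supported on $X$, and recovers $\pi_*E\cong\cO_Y^{\oplus 2}$ from the vanishing of $\varpi_*\cO_{\bP}(-1)$ and $R^1\varpi_*\cO_{\bP}(-1)$. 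You instead use the equivalence between sheaves on $X$ and $\pi_*\cO_X$-modules on $Y$ and write down the module structure $\phi = \bigl(\begin{smallmatrix} 0 & a \\ b & 0\end{smallmatrix}\bigr)$ on $\cO_Y^{\oplus 2}$ directly; one checks that your module is exactly the restriction to $X$ of the paper's cokernel (up to sign), so the two constructions produce the same object. Your route avoids the compactification and the pushforward computation entirely, and --- more substantively --- it supplies a verification that the paper omits: that the resulting sheaf is actually a \emph{line bundle}, via the observation that smoothness of $X$ forces $\mathrm{Div}(a)\cap\mathrm{Div}(b)=\emptyset$, so that locally one of $(1,0)$, $(0,1)$ generates over $\pi_*\cO_X$. (The paper's proof only establishes that $E$ is a sheaf on $X$ with trivial direct image; without the disjointness of $\mathrm{Div}(a)$ and $\mathrm{Div}(b)$ one would only get a rank-one torsion-free relatively Ulrich sheaf.) Your local-generator criterion, that $\alpha^2 b' - \beta^2 a'$ be a unit, checks out. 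The one point to state explicitly rather than leave implicit is the smoothness of $X$, which the proposition inherits from the paper's standing conventions (Definition \ref{defi}) rather than from its own hypotheses.
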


\begin{proof}
We will use the method of matrix factorization. The method of matrix factorizations in the context
of \ul bundle has been explored in much generality in \cite{parameswaran2024ulrich}. Recall that the cyclic covering $X$ is
constructed as a closed subscheme of the total space $\text{Spec}(\text{Sym}(L^{-1}))$ of $L$ given by $(t^2-\theta^*s)$.
We denote the total space projection by $\theta\,:\, Z \,\longrightarrow\, Y$. Consider the projective compactification
\begin{equation}\label{vp}
\bP\ =\ \text{proj}(Sym(\cO_Y \oplus L^{-1}))\ \stackrel{\varpi}{\longrightarrow}\ Y .
\end{equation}
Take $e \,\in\, H^0(\bP,\, \cO_{\bP}(1))$ which is non-vanishing along $Z$, thus giving a trivialization
$\cO_Z \,\cong\, {\cO_{\bP}(1)}\big\vert_Z$. Let
$T$ be the section of $\varpi^*L \otimes \cO_{\bP}(1)$, where $\varpi$ is
the projection in \eqref{vp}, which restricts to $t \,\in\, H^0(Z,\, \theta^*L)$.
Consider the sections
$\varpi^*a \otimes e$ and $\varpi^*b \otimes e$ of $\varpi^*L \otimes \cO_{\bP}(1)$. Take the matrix
				$B\,=\,\begin{bmatrix}
					T & \varpi^*a \otimes e \\
					\varpi^*b \otimes e & T 
				\end{bmatrix}$
with coefficients in $H^0(\bP,\, \varpi^*L \otimes \cO_{\bP}(1))$. We have the following \ses
				\[\begin{tikzcd}
	0 & {\mathcal{O}_{\bP}(-1)^{\oplus 2}} \otimes \varpi^*L^{-1} && {\mathcal{O}_{\bP}^{\oplus 2}} & E & 0
					\arrow[from=1-1, to=1-2]
					\arrow["{\times B}", from=1-2, to=1-4]
					\arrow[from=1-4, to=1-5]
					\arrow[from=1-5, to=1-6]
				\end{tikzcd}\]
Then the cokernel sheaf $E$ is supported on the double covering $X$ given by the determinant of $B$. Since
$\varpi_* \cO_{\bP}(-1)\,=\,0$ and $R^1\varpi_*\cO_{\bP}(-1)\,=\,0$, applying $\varpi_*$
we get that $\pi_*E \,\cong\, \cO_Y^{\oplus 2}$. Thus, $E$ is a rank $1$ \rul coherent sheaf on $X$ \wrt $\pi$.
\end{proof}

\begin{remark}
	Suppose that the sections $a$ and $b$ have no common zero. Then the cokernel $E$ in the above proposition is a line bundle.
	 Locally on $Y$, we may write $X=\spec\, R[t]/(t^2-ab)$ and $B\,=\,\begin{bmatrix} t&a\\ b&t\end{bmatrix}$.
	On $X$, we have $\det B\,=\,t^2-ab\,=\,0$, so $B$ has rank at most one. Since $a$ and
	$b$ do not vanish simultaneously, at every point in $X$ at least one entry of
	$B$ is a unit in the corresponding local ring. Hence $B$ has constant rank one at every point in $X$. Moreover, since $\det(B)\,=\,0$, elementary row and column operations put it in the form $\begin{bmatrix}1&0\\0&0\end{bmatrix}.$
	Therefore, $\coker(B)$ is locally free of rank one. 
\end{remark}

The following is the main theorem of this section. When $\pi$ is a standard abelian covering, the existence of a \rul sheaf follows as a corollary of this theorem. 

\begin{theorem}\label{sufficient}
Let $\pi\,:\, X \,\longrightarrow\, Y$ be a standard cyclic covering map of degree $d$
between projective varieties. Assume that the branch divisor $s \,\in
\, H^0(Y,\, L^{\otimes d})$ is in the image of the $d$-fold multiplication map
$$H^0(Y,\, L)^{\otimes d} \ \longrightarrow\ H^0(Y,\, L^{\otimes d}).$$
Then there exists a \rul coherent sheaf $E$ on $X$ \wrt $\pi$.
\end{theorem}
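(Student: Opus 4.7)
The plan is to generalize the matrix-factorization construction of Proposition \ref{algstruc} to cyclic covers of arbitrary degree $d$ and to branch sections that are sums of pure tensors. I would keep the geometric setup of Proposition \ref{algstruc}: embed $X$ as a divisor in the projective $\mathbb{P}^1$-bundle $\varpi \,:\, \mathbb{P} := \operatorname{Proj}(\operatorname{Sym}(\mathcal{O}_Y \oplus L^{-1})) \,\longrightarrow\, Y$, cut out by $T^d - \varpi^*s \otimes e^d \in H^0(\mathbb{P},\,\varpi^*L^{\otimes d} \otimes \mathcal{O}_{\mathbb{P}}(d))$. The vanishings $\varpi_* \mathcal{O}_{\mathbb{P}}(-1) = R^1\varpi_* \mathcal{O}_{\mathbb{P}}(-1) = 0$ and $\varpi_* \mathcal{O}_{\mathbb{P}} = \mathcal{O}_Y$ mean that any short exact sequence
\[
0 \,\longrightarrow\, \bigl(\varpi^*L^{-1} \otimes \mathcal{O}_{\mathbb{P}}(-1)\bigr)^{\oplus m} \,\xrightarrow{\ B\ }\, \mathcal{O}_{\mathbb{P}}^{\oplus m} \,\longrightarrow\, E \,\longrightarrow\, 0,
\]
with $B$ a matrix of sections of $\varpi^*L \otimes \mathcal{O}_{\mathbb{P}}(1)$ whose determinant vanishes along $X$, will push forward to give $\pi_* E \cong \mathcal{O}_Y^{\oplus m}$. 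The whole task then reduces to finding a matrix $A$ whose entries are sections of $L$ on $Y$ with $A^d = s \cdot \operatorname{id}$; one sets $B = T\cdot\operatorname{id} - \varpi^*A \otimes e$, whose determinant equals the characteristic polynomial of $\varpi^*A \otimes e$ evaluated at $T$, namely $(T^d - \varpi^*s \otimes e^d)^{m/d}$.

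For a pure tensor $s = a_1 a_2 \cdots a_d$, the $d \times d$ cyclic-shift matrix $A$ with $a_i$ in position $(i,i+1)$ for $1 \le i \le d-1$ and $a_d$ in position $(d,1)$ satisfies $A^d = s \cdot \operatorname{id}$ by direct composition around the cycle, recovering the shape of Proposition \ref{algstruc} when $d = 2$. For a general sum $s = \sum_{k=1}^N a_1^k \cdots a_d^k$, the naive block-diagonal sum of the individual pure-tensor matrices $A_k$ fails, since it yields $\operatorname{diag}(f_k \operatorname{id})$ with $f_k = a_1^k \cdots a_d^k$ rather than $s \cdot \operatorname{id}$. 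My plan here is a twisted-tensor-product (Clifford-type) assembly: work on $(k^d)^{\otimes N}$ and define $\tilde A_k$ by letting $A_k$ act on the $k$-th tensor factor while acting on the other factors by diagonal operators built from powers of a primitive $d$-th root of unity $\omega$, arranged so that $\tilde A_k^d = f_k \operatorname{id}$ while $\tilde A_j \tilde A_k = \omega \, \tilde A_k \tilde A_j$ for $j < k$. In the multinomial expansion of $\bigl(\sum_k \tilde A_k\bigr)^d$, the $\omega$-commutation together with the cyclotomic identity $1 + \omega + \cdots + \omega^{d-1} = 0$ should force every mixed monomial (one involving more than one index $k$) to cancel, leaving only $\sum_k \tilde A_k^d = s \cdot \operatorname{id}$.

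Once such an $A$ is in hand, the short exact sequence above produces a cokernel sheaf $E$ supported on $X$; the matrix-factorization identity $B \cdot B' = (T^d - \varpi^*s \otimes e^d)\operatorname{id}$, with $B' = T^{d-1}\operatorname{id} + T^{d-2} A + \cdots + A^{d-1}$, together with smoothness of $X$, guarantees that $E$ is locally free on $X$. The identification $\pi_* E \cong \mathcal{O}_Y^{\oplus m}$ then follows directly from the pushforward vanishings above. The main obstacle I anticipate is the combinatorial verification in the sum case: showing that the $\omega$-commutation really kills every mixed monomial in $\bigl(\sum_k \tilde A_k\bigr)^d$, which reduces to a character-sum identity on $(\mathbb{Z}/d\mathbb{Z})^N$ but requires careful bookkeeping of reorderings and sign/phase accumulation as factors move past one another.
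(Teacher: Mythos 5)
Your proposal is correct, and its geometric skeleton is identical to the paper's: realize $X$ as the divisor $T^d-\varpi^*s\otimes e^{\otimes d}$ in $\mathbb{P}=\operatorname{Proj}(\operatorname{Sym}(\mathcal{O}_Y\oplus L^{-1}))$, produce a matrix factorization with entries in $H^0(\mathbb{P},\varpi^*L\otimes\mathcal{O}_{\mathbb{P}}(1))$, take the cokernel $E$, and conclude $\pi_*E\cong\mathcal{O}_Y^{\oplus m}$ from $\varpi_*\mathcal{O}_{\mathbb{P}}(-1)=R^1\varpi_*\mathcal{O}_{\mathbb{P}}(-1)=0$. Where you genuinely diverge is in the production of the factorization itself: the paper treats $T$ and the sections $\varpi^*a^j_i\otimes e$ as indeterminates and invokes Lemma 1.5 of Herzog--Ulrich--Backelin for the existence of a factorization of $T^d-\sum_i\prod_j(\cdot)$, whereas you build the matrix $A$ with $A^d=s\cdot\operatorname{id}$ explicitly, via the cyclic-shift matrix for a pure tensor and a twisted tensor product over a primitive $d$-th root of unity for a sum of $N$ pure tensors. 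Your cancellation step is sound: it is exactly the statement that the Gaussian multinomial coefficients $\binom{d}{e_1,\dots,e_N}_\omega$ vanish unless some $e_k=d$, since $[d]_\omega=1+\omega+\cdots+\omega^{d-1}=0$ while $[e]_\omega!\neq 0$ for $e<d$; and the standard Weyl-pair realization (shift matrix in slot $k$, the diagonal $\operatorname{diag}(1,\omega,\dots,\omega^{d-1})$ in slots $j<k$, identity elsewhere) delivers the required relations $\tilde A_j\tilde A_k=\omega\tilde A_k\tilde A_j$ and $\tilde A_k^d=f_k\operatorname{id}$. This is in essence a self-contained proof of the cited Herzog--Ulrich--Backelin lemma, and it buys you something the paper explicitly forgoes: control of the size, namely $m=d^N$ where $N$ is the number of pure tensors, hence an explicit bound $\operatorname{rank}(E)=d^{N-1}$ (the paper remarks that its argument gives no control over the rank). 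Two small points to tighten: the identity $\det B=(T^d-\varpi^*s\otimes e^{\otimes d})^{m/d}$ is not a formal consequence of $A^d=s\cdot\operatorname{id}$ alone (a diagonalizable $A$ with unevenly distributed $d$-th roots is a counterexample), though it does hold for your construction because $\operatorname{tr}(A^j)=0$ for $0<j<d$; and in any case the weaker consequences of $B\cdot B'=(T^d-\varpi^*s\otimes e^{\otimes d})\operatorname{id}$ — injectivity of $B$, annihilation of $E$ by the equation of $X$, and local freeness of $E$ on the smooth $X$ via Auslander--Buchsbaum — already suffice, exactly as in the paper.
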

		
\begin{proof}
{}From the assumption it follows that the branch locus $s\, \in \,H^0(Y,\, L^{\otimes d})$ is of the form
$$\sum_i a^1_i \otimes a^2_i \otimes \cdots \otimes a^d_i ,$$
where $a^j_i \,\in\, H^0(Y,\, L)$ for all $1 \,\le\, j \,\le\, d$, and all $i$. As
before, the cyclic covering $X$ is constructed as a closed subscheme of the total space
$\text{Spec}(\text{Sym}(L^{-1}))$ of $L$ given by $(t^d-\theta^*s)$. Denote the total space by $Z$, with the natural projection $\theta\,:\,Z \,\longrightarrow\,Y$, and let $t \,\in\,H^0(Z,\,\theta^*L)$ be the tautological section.
Consider the projective compactification $\bP\,=\,\text{Proj}({\rm Sym}(\cO_Y \oplus L^{-1}))$. 
We have the following diagram

\[\begin{tikzcd}
	X && Z && {\mathbb{P}} \\
	&& Y
	\arrow["{\text{closed}}", hook, from=1-1, to=1-3]
	\arrow["\pi", from=1-1, to=2-3]
	\arrow["{\text{open}}", hook, from=1-3, to=1-5]
	\arrow["\theta", from=1-3, to=2-3]
	\arrow["\varpi", from=1-5, to=2-3]
\end{tikzcd}\]

Take
$e \,\in\, H^0(\bP,\, \cO_{\bP}(1))$ which does not vanish on $Z$, thus giving a trivialization
$\cO_Z \,\cong\, {\cO_{\bP}(1)}\big\vert_{Z}$. Consider a matrix factorization of
$$T^d-\sum_i (\varpi^*a^1_i \otimes e) \otimes (\varpi^*a^2_i \otimes e) \otimes \cdots \otimes (\varpi^*a^d_i \otimes e).$$
The above expression is a section in $H^0(\bP,\, \varpi^*L^{\otimes d} \otimes \cO_{\bP}(d))$. The existence of a
matrix factorization given by $$B_1 \cdot B_2\cdot \ldots\cdot B_d\ =\
\big(T^d-\sum_i (\varpi^*a^1_i \otimes e) \otimes (\varpi^*a^2_i \otimes e) \otimes \cdots \otimes (\varpi^*a^d_i \otimes e) \big)
\times \id_{m \times m},$$
where $B_k$ are square matrices of size $m \times m$ with entries in $H^0(\bP, \, \varpi^*L \otimes \cO_{\bP}(1))$ is proved as follows. All the sections $T$ and $\delta_{ij}\,=\,\varpi^*(a^j_i) \otimes e$ belongs to $H^0(\bP, \varpi^*L \otimes \cO_{\bP}(1))$. Thus, any homogeneous degree $d$ polynomial in formal variables $x_0, x_{ij}$, evaluated at $x_0=T$ and $x_{ij}=\delta_{ij}$, gives a global section in $H^0(\bP, \varpi^*L^{\otimes d} \otimes \cO_{\bP}(d))$. We consider the homogeneous polynomial
$$G(x_0, x_{ij})\,=\,x_0^d-\sum_{i}x_{i1}x_{i2} \cdots x_{id}.$$

By \cite[Lemma 1.5]{HERZOG1991187}, there exist matrices $\beta_1, \beta_2, \cdots \beta_d$ whose entries are linear forms in variables $x_0, x_{ij}$ such that
$$\beta_1 \cdot \beta_2 \cdot \cdots \beta_d\,=\,G(x_0, x_{ij}) \cdot \id_{m \times m},$$
where the $\beta_k$ are matrices of size $m \times m$; see also \cite[Proposition 2.3]{parameswaran2024ulrich}.
Substituting $x_0=T$ and $x_{ij}=\delta_{ij}$, and denoting the resulting matrices by $B_k$, we obtain matrices $B_k$ with entries in $H^0(\bP, \varpi^*L \otimes \cO_{\bP}(1))$ such that
\begin{equation} \label{exmatfac}
	B_1 \cdot B_2 \cdots B_d\,=\,G(T, \delta_{ij}) \cdot \id_{m \times m}\,=\,\big(T^d-\sum_{i} (\varpi^*(a^1_i) \otimes e) \cdot (\varpi^*(a^2_i) \otimes e) \cdots (\varpi^*(a^d_i) \otimes e)\big)\, \cdot \id_{m \times m}.
\end{equation}
Then the cokernel for a  \ses given by the multiplication of $B_k$ will give \rul coherent sheaf on $X$ as follows:
\begin{equation}
\label{cokerseq}
\begin{tikzcd}
0 & {\mathcal{O}_{\bP}(-1)^{\oplus m}} \otimes \varpi^*L^{-1} && {\mathcal{O}_{\bP}^{\oplus m}} & \widetilde{E} & 0
\arrow[from=1-1, to=1-2]
\arrow["{\times B_k}", from=1-2, to=1-4]
\arrow[from=1-4, to=1-5]
\arrow[from=1-5, to=1-6]
\end{tikzcd}
\end{equation}
where $m$ is the size of the matrices $B_k$. Then the cokernel sheaf $\widetilde{E}$ is supported along the
determinant of $B_k$; this determinant can be shown to be $\big(T^d-\sum_i ((\varpi^*a^1_i \otimes e)
\otimes (\varpi^*a^2_i \otimes e) \otimes \cdots \otimes (\varpi^*a^d_i \otimes e))\big)^{\frac{m}{d}}$.  The cyclic covering $j\,:\,X \longhookrightarrow \bP$ is given by $T^d-\sum_i (\varpi^*a^1_i \otimes e) \otimes (\varpi^*a^2_i \otimes e) \otimes \cdots \otimes (\varpi^*a^d_i \otimes e)$. Therefore, the exists a coherent sheaf $E$ on $X$ such that $j_* E \cong \widetilde{E}$ (for more details see Remark \ref{support}).

Since $\varpi_* \cO_{\bP}(-1)\,=\,0$ and $R^1\varpi_*\cO_{\bP}(-1)\,=\,0$, applying $\varpi_*$ to
\eqref{cokerseq} we get that $\varpi_*\widetilde{E} \,\cong\, \cO_Y^{\oplus m}$. Thus, $\pi_* E\,=\,(\varpi \circ j)_* E \,=\,\varpi_* (j_* E)\,\cong\, \varpi_* \widetilde{E} \,\cong\, \cO_Y^{\oplus m}$. This implies that $E$ is a \rul
coherent sheaf of rank $\frac{m}{d}$. 
In this case, there will be no control over the rank of the \rul coherent sheaf. The rank will depend on the size of the matrix factorization $m$, or in other words, on the expression of $s$.
\end{proof}

\begin{remark}\label{support}
		A priori, the cokernel $\widetilde{E}$ may be supported on a nonreduced scheme $\widetilde{X}$ whose reduced induced subscheme is $X$.
		Let $I_{X}$ be the ideal sheaf of of the hypersurface $j\,:\,X \longhookrightarrow \bP$ defined by $F\,:\,=T^d-\sum_i (\varpi^*a^1_i \otimes e) \otimes (\varpi^*a^2_i \otimes e) \otimes \cdots \otimes (\varpi^*a^d_i \otimes e)$. To show that $\widetilde{E}$ is supported on $X$, it is enough to prove that $I_X \cdot \widetilde{E}=0$. It is known that all cyclic rotations of the product appearing in the matrix factorization \eqref{exmatfac} are equal to $F \cdot \id_{m \times m}$. That is, for every $1 \le k \le d$, $B_k \cdot B_{k+1} \cdots B_d \cdot B_1 \cdots B_{k-1}=F \cdot \id_{m \times m}$. Consider the diagram
	\[
	\begin{tikzcd}[column sep=large]
		{\mathcal{O}_{\mathbb{P}}(-d)^{\oplus m}\otimes \varpi^*L^{-d}}
		\arrow[r, "{\times (B_{k+1}\cdots B_{k-1})}"]
		&[5em]
		{\mathcal{O}_{\mathbb{P}}(-1)^{\oplus m}\otimes \varpi^*L^{-1}}
		\arrow[r, "{\times B_k}"]
		&
		{\mathcal{O}_{\mathbb{P}}^{\oplus m}}
	\end{tikzcd}
	\]
	Since $B_k \cdot B_{k+1} \cdots B_d \cdot B_1 \cdots B_{k-1}\,=\,F \cdot \id_{m \times m}$,  multiplication by $F$ is zero $\widetilde{E}\,=\,\coker (B_k)$.
	 Thus $I_X \cdot \widetilde{E}\,=\,0$. The category of $\cO_X$ modules is equivalent to  the category of $\cO_{\bP}$ modules annihilated by $I_X$.  Therefore there exists a coherent sheaf $E$ on $X$ such that $j_* E \,\cong \, \widetilde{E}$.
	\end{remark}

\begin{remark}
	Let $Y$ be Cohen-Macaulay. Since $\bP$ is a projective bundle over $Y$, $\bP$ is also Cohen-Macaulay. Let $Q \in X$. By the definition sequence \eqref{cokerseq}, the sheaf  $\widetilde{E}$, regarded as an $\cO_{\bP}$ module has a locally free resolution of length $1$. Hence $\text{pd}_{\cO_{\bP, Q} }(\widetilde{E}_Q)\,=\,1$. By Auslander-Buchsbaum formula, we obtain $\text{depth}_{\cO_{\bP, Q} }(\widetilde{E}_Q)\,=\,\dim \cO_{\bP, Q} -1\,=\,\dim \cO_{X, Q}$, where the last equality follows since $X \subset \bP$ is a hypersurface. By Remark \ref{support}, there exists a coherent sheaf $E$ on $X$ such that $j_* E \cong \widetilde{E}$. Moreover, for the closed immersion $j\,:\,X \longhookrightarrow \bP$, we have
	$$\,\text{depth}_{\cO_{X, Q} }(E_Q)\,=\text{depth}_{\cO_{\bP, Q} }((j_*E)_Q)\,=\,\text{depth}_{\cO_{\bP, Q} }(\widetilde{E}_Q)\,=\,\dim \cO_{X, Q}.$$
	Therefore $E$ is a maximal Cohen-Macaulay relatively Ulrich sheaf. 
\end{remark}

\begin{corollary} \label{standardsuff}
	Let $\pi\,:\, X \,\longrightarrow\, Y$ be a standard abelian Galois covering of degree $d\,=\,(d_1,\,
	d_2,\, \cdots,\, d_l)$ as in Definition \ref{standardabelian}. Assume that the branch divisor $B_i$ is in the
	image of the $d_i$-fold multiplication map
	$$H^0(X_i,\, N_i)^{\otimes d_i} \ \longrightarrow\ H^0(X_i,\, N_i^{\otimes d_i})$$
	for all $1 \,\le\, i \,\le\, l$. Then there exists a \rul coherent sheaf on $X$ \wrt $\pi$. 	
\end{corollary}

\begin{proof}
	Recall that $\pi$ is the composition of standard cyclic coverings
	\[\begin{tikzcd}
		{X=X_0} & {X_1} & {X_2} & \ldots & {X_{l-1}} & {X_{l}=Y}
		\arrow["{\pi_1}", from=1-1, to=1-2]
		\arrow["{\pi_2}", from=1-2, to=1-3]
		\arrow["{\pi_3}", from=1-3, to=1-4]
		\arrow[from=1-4, to=1-5]
		\arrow["{\pi_l}", from=1-5, to=1-6]
	\end{tikzcd}\]	
where $\pi_i\,:\,X_{i-1} \,\longrightarrow\, X_{i}$ is a standard cyclic covering of degree $d_i$.
We consider the branch divisor $B_{i}
\,\in\, |N_i^{\otimes d_i}|$ on $X_i$. Then $X_{i-1}$ is
the associated cyclic covering.
Thus, by Theorem \ref{sufficient}, there exists a \rul coherent sheaf $E_i$ on $X_{i-1}$ \wrt $\pi_i$ for each
$1 \,\le\, i \,\le\, l$.

We can construct a \rul sheaf $E$ on $X$ by induction in the following way.
Assume that $F$ is \rul on $X_i$ \wrt the map $ \pi_l \circ \pi_{l-1} \circ \ldots \circ\pi_{i+1}
\,:\, X_i \,\longrightarrow\, Y$. From the construction, we have a \rul sheaf $E_i$ on $X_{i-1}$ \wrt 
$\pi_i\,:\,X_{i-1} \,\longrightarrow \,X_i$. Using projection formula, $$ \pi_i^*F \otimes E_i$$
is a \rul sheaf on $X_{i-1}$ \wrt the map
$ \pi_l \circ \pi_{l-1} \circ \ldots \circ\pi_{i+1} \circ \pi_i\,:
\, X_{i-1} \,\longrightarrow \,Y$. Hence, by induction, there exists a \rul sheaf $E$ on $X$ \wrt $\pi$.
\end{proof}

\begin{corollary} \label{admissiblecor}
Let $\pi\,:\, X \,\longrightarrow\, Y$ be an admissible  abelian Galois covering of degree $d\,=\,(d_1,\,
	d_2,\, \cdots,\, d_l)$ as in Definition \ref{admissibleab}. Assume that the branch divisor $B'_i$ is in the
	image of the $d_i$-fold multiplication map
	$$H^0(Y,\, M_i)^{\otimes d_i} \ \longrightarrow\ H^0(Y,\, M_i^{\otimes d_i})$$
	for all $1 \,\le\, i \,\le\, l$. Then there exists a \rul coherent sheaf on $X$ \wrt $\pi$. 	
\end{corollary}

\begin{proof}
 	Since every admissible abelian Galois covering is a standard abelian cover, we have $\pi$ is the composition of cyclic coverings
 \[\begin{tikzcd}
 	{X=X_0} & {X_1} & {X_2} & \ldots & {X_{l-1}} & {X_{l}=Y}
 	\arrow["{\pi_1}", from=1-1, to=1-2]
 	\arrow["{\pi_2}", from=1-2, to=1-3]
 	\arrow["{\pi_3}", from=1-3, to=1-4]
 	\arrow[from=1-4, to=1-5]
 	\arrow["{\pi_l}", from=1-5, to=1-6]
 \end{tikzcd}\]	
 where $\pi_i\,:\,X_{i-1} \,\longrightarrow\, X_{i}$ is a standard cyclic covering of degree $d_i$. We recall the map $\pi_{i+1}'\,=\,\pi_{i+1} \circ \pi_{i-1} \circ \ldots \circ \pi_l\,:\,X_i \longrightarrow Y$ (see \eqref{keydiag10}).
 We consider the branch divisor $B_i\,:=\,\,\pi_{i+1}'^*(B'_{i})
 \,\in\, |\pi_{i+1}'^*(M_i^{\otimes d_i})|$ on $X_i$. Then $X_{i-1}$ is
 the associated cyclic covering for the data $(B_i, \pi_{i+1}'^* M_i)$.
 Since $B'_i$ is in the image of the $d_i$-fold multiplication map
 $$H^0(Y,\, M_i)^{\otimes d_i}\ \longrightarrow\ H^0(Y,\, M^{\otimes d_i}_i),$$
 it follows that $B_i=\pi_{i+1}'^*(B_{i}')$ is in the image of the
 $d_i$-fold multiplication map
 $$H^0\big(X_i, \, \pi_{i+1}'^* M_i\big)^{\otimes d_i}\ \longrightarrow
 \ H^0\big(X_i, \, (\pi_{i+1}'^*M_i)^{\otimes d_i}\big).$$
 Thus, by Corollary \ref{standardsuff}, there exists a \rul sheaf $E$ on $X$ \wrt $\pi$.
\end{proof}

Proposition \ref{algstruc} is a special case of a more general phenomenon. We generalize it below. Proposition \ref{minrank1} and Corollary \ref{minrank2}  may also be viewed as special cases of the broader Question \ref{Questrank}. 

\begin{proposition} \label{minrank1}
	Let $\pi\,:\,X \longrightarrow Y$ be a standard cyclic cover of degree $d$ of projective varieties. Let $L$ be a line bundle $Y$, and $s \,\in\, H^0(Y,\, L^{\otimes d})$ be a nonzero section defining the branch divisor $B$ of $\pi$.
	Assume there exists a $d \times d$ matrix $C$ with entries in $H^0(Y, L)$ such that $\det (x \id_{d \times d}-C)=x^d-s$ as a formal homogeneous identity, where $x$ is formal variable and multiplication $x \cdot \alpha$, for $\alpha \in H^0(Y, L)$, is understood formally. Then there exists a rank $1$ relatively Ulrich sheaf on $X$ \wrt $\pi$.
	\end{proposition}

\begin{proof}
	 Recall that the cyclic covering $X$ is
	constructed as a closed subscheme of the total space $\text{Spec}(\text{Sym}(L^{-1}))$ of $L$ given by $(t^d-\theta^*s)$.
	We denote the total space by $\theta: Z \longrightarrow Y$. Consider the projective compactification
	\begin{equation}
		\bP\ =\ \text{proj}(Sym(\cO_Y \oplus L^{-1}))\ \stackrel{\varpi}{\longrightarrow}\ Y .
	\end{equation}
	Take $e \,\in\, H^0(\bP,\, \cO_{\bP}(1))$ which is non-vanishing along $Z$, thus giving a trivialization
	$\cO_Z \,\cong\, {\cO_{\bP}(1)}\big\vert_Z$. Let
	$T$ be the section of $\varpi^*L \otimes \cO_{\bP}(1)$, where $\varpi$ is
	the projection in \eqref{vp}, which restricts to $t \,\in\, H^0(Z,\, \theta^*L)$.
	Then $\varpi^* s \otimes e^d \in H^0(\bP,\,\varpi^*L^{\otimes d} \otimes \cO_{\bP}(d))$. We consider the matrix $B=e \cdot \varpi^*(C)$, so that the $ij$-th entry is given by $B_{ij}\,=\,(e \cdot \varpi^*(C))_{ij}\,=\,e \otimes \varpi^*(C_{ij})$. Thus, $B$ is a $d \times d$ matrix with entries in $H^0(\bP, \varpi^*L \otimes \cO_{\bP}(1))$. By the assumption on $C$, we have $\det (ex \id_{d \times d}-B)=(ex)^d-\varpi^*s \otimes e^d$. Substituting $ex\,=\,T$, we  obtain  $\det (T\id_{d \times d}-B)=T^d-\varpi^*s \otimes e^d$. Put $S\,:\,=T\id_{d \times d}-B$. Thus, $S$ is a $d \times d$ matrix with entries in $H^0(\bP, \varpi^*L \otimes \cO_{\bP}(1))$. Consider the following \ses
	\[\begin{tikzcd}
		0 & {\mathcal{O}_{\bP}(-1)^{\oplus d}} \otimes \varpi^*L^{-1} && {\mathcal{O}_{\bP}^{\oplus d}} & E & 0
		\arrow[from=1-1, to=1-2]
		\arrow["{\times S}", from=1-2, to=1-4]
		\arrow[from=1-4, to=1-5]
		\arrow[from=1-5, to=1-6]
	\end{tikzcd}\]
	Then the cokernel sheaf $E$ is supported on the cyclic covering $X=Z(T^d-\varpi^*s \otimes e^d)$ given by the determinant of $S$. Since
	$\varpi_* \cO_{\bP}(-1)\,=\,0$ and $R^1\varpi_*\cO_{\bP}(-1)\,=\,0$, applying $\varpi_*$
	we get that $\pi_*E \,\cong\, \cO_Y^{\oplus d}$.
	Therefore $E$ is a relatively Ulrich coherent sheaf of rank $1$.
\end{proof}

As a corollary, we generalize Proposition  \ref{algstruc}.
\begin{corollary} \label{minrank2}
	Let $L\, \longrightarrow\, Y$ be a line bundle on a  projective variety $Y$ and a nonzero section
	$s \,\in\, H^0(Y,\, L^{\otimes d})$ such that
	$s\,=\,a_1 \otimes a_2 \otimes \cdots \otimes a_d$ with $a_i \in\, H^0(Y,\, L)$ for each $1 \le i \le d$. Let $\pi\,:\, X \,\longrightarrow \, Y$ be the
	corresponding standard cyclic covering of degree $d$ branched over the divisor of $s$. Then there exists a \rul coherent sheaf of rank $1$ on $X$ \wrt $\pi$. 
\end{corollary}

\begin{proof}
	We consider the following matrix
	\footnotesize
	\[
	C=
	\begin{bmatrix}
		0 & a_{1} & 0 & \cdots & 0 \\
		0 & 0 & a_{2} & \cdots & 0 \\
		\vdots & \vdots & \ddots & \ddots & \vdots \\
		0 & 0 & \cdots & 0 & a_{d-1} \\
		a_{d} & 0 & \cdots & 0 & 0
	\end{bmatrix}.
	\]
	\normalsize
	Then we have $C^{d}\,=\,a_1 \cdot a_2 \cdot \cdots \cdot a_d \id_{d \times d}\,=\,s \cdot \id_{d \times d}$, where the product $a_1 \cdot a_2 \cdots \cdots a_d$ is identified with the section $s \in H^0(Y,\,L^{\otimes d})$. For a formal variable $x$, we also have the formal identity $\det (x \id_{d \times d}-C)\,=\,x^d-\,a_1 \cdot a_2 \cdot \cdots \cdot a_d =x^d-s$. Therefore by Proposition \ref{minrank1}, there exists a \rul coherent sheaf $E$ of rank $1$ on $X$ \wrt $\pi$. 
\end{proof}

In the following proposition, we prove that, branch divisors defined by sections of suitable line bundles in 
a complete intersection smooth, projective variety satisfies our sufficient criterion.

\begin{proposition} \label{ciexamples}
Let $Z \,\hookrightarrow\, \bP^N$ be a complete intersection subvariety defined by vanishing locus of polynomials of degrees
$a_1,\, a_2,\, \cdots,\, a_k$. Then for any $n \ge 0$, for the line bundle $L\,=\,\cO_{\bP^N}(n)\big\vert_Z$ on $Z$,
the $m$-fold multiplication map $$H^0(Z,\, L)^{\otimes m}\ \longrightarrow\ H^0(Z,\, L^{\otimes m})$$
is surjective.
\end{proposition}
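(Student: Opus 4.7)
The plan is to factor the $m$-fold multiplication map through the ambient projective space $\bP^N$ and reduce to two standard surjectivities. Consider the commutative square
\[\begin{tikzcd}
H^0(\bP^N,\, \cO_{\bP^N}(n))^{\otimes m} && H^0(\bP^N,\, \cO_{\bP^N}(nm)) \\
H^0(Z,\, L)^{\otimes m} && H^0(Z,\, L^{\otimes m})
\arrow["{\mu_{\bP^N}}", from=1-1, to=1-3]
\arrow["{\rho^{\otimes m}}"', from=1-1, to=2-1]
\arrow["\rho", from=1-3, to=2-3]
\arrow["{\mu_Z}", from=2-1, to=2-3]
\end{tikzcd}\]
in which the horizontal arrows are the multiplication maps and the vertical arrows are (tensor powers of) the restriction. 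If both $\mu_{\bP^N}$ and $\rho$ are surjective, then so is the diagonal composition $\rho\circ\mu_{\bP^N}\,=\,\mu_Z\circ\rho^{\otimes m}$, which forces $\mu_Z$ itself to be surjective.

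The surjectivity of $\mu_{\bP^N}$ is classical: $H^0(\bP^N,\, \cO_{\bP^N}(nm))$ is spanned by degree-$nm$ monomials in the homogeneous coordinates, and every such monomial factors visibly as a product of $m$ monomials of degree $n$. (The case $n\,=\,0$ is trivial.)

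For the surjectivity of $\rho$, I would invoke the projective normality of complete intersections. The cleanest route is an induction on the number $k$ of defining hypersurfaces. Writing $Z_0\,=\,\bP^N$ and $Z_j\,=\,V(f_1,\ldots,f_j)$ for $1\,\le\, j\,\le\, k$, the short exact sequence
\[
0 \,\longrightarrow\, \cO_{Z_{j-1}}(d-a_j) \,\longrightarrow\, \cO_{Z_{j-1}}(d) \,\longrightarrow\, \cO_{Z_j}(d) \,\longrightarrow\, 0
\]
combined with the vanishing of $H^1(Z_{j-1},\, \cO_{Z_{j-1}}(d-a_j))$ yields the surjectivity of the restriction $H^0(Z_{j-1},\, \cO_{Z_{j-1}}(d)) \,\longrightarrow\, H^0(Z_j,\, \cO_{Z_j}(d))$. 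Composing these restrictions over $j\,=\,1,\ldots,k$ and taking $d\,=\,nm$ then yields the surjectivity of $\rho$. The requisite $H^1$ vanishings in turn follow by chasing the Koszul resolutions of the $\cO_{Z_{j-1}}$ over $\bP^N$, using Serre's computation $H^i(\bP^N,\, \cO(t))\,=\,0$ for $0 < i < N$.

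The main technical point is bookkeeping these intermediate $H^1$ vanishings through the inductive (or Koszul) argument, but these are standard and unobstructed once $\dim Z \,\ge\, 1$; in the degenerate case $\dim Z\,=\,0$ the scheme $Z$ is finite and the statement reduces to a direct computation.
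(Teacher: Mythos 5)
Your proof is correct and follows essentially the same route as the paper: the same commutative square reducing to surjectivity of the multiplication map on $\bP^N$ plus surjectivity of the restriction $H^0(\bP^N,\cO(nm))\to H^0(Z,L^{\otimes m})$, with the latter obtained by the same induction through the intermediate complete intersections $Z_j$ using the vanishing of $H^1(Z_{j-1},\cO(nm-a_j)|_{Z_{j-1}})$. The only cosmetic difference is that you justify those $H^1$ vanishings by a Koszul chase while the paper cites an argument from \cite{parameswaran2024ulrich}.
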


\begin{proof}
Consider the following natural commutative diagram
\footnotesize
  \begin{equation} \label{commutative}
  \begin{tikzcd}
	{H^0(\bP^N, \, \mathcal{O}_{\mathbb{P}^N}(n))^{\otimes m}} &&& {H^0(\bP^N, \, \mathcal{O}_{\mathbb{P}^N}(m \cdot n))} \\
	\\
	{H^0(Z, \, L)^{\otimes m}} &&& {H^0(Z, \, L^{\otimes m})}
	\arrow["{\text{multiplication}}", from=1-1, to=1-4]
	\arrow["{\text{restriction}}"', from=1-1, to=3-1]
	\arrow["{\text{restriction}}", from=1-4, to=3-4]
	\arrow["{\text{multiplication}}"', from=3-1, to=3-4]
\end{tikzcd}
\end{equation}
\normalsize
On $\bP^N$, we have the $m$-fold multiplication map
$${H^0(\mathbb{P}^N,\, \mathcal{O}_{\mathbb{P}^N}(n))^{\otimes m}}\ \longrightarrow\
{H^0(\mathbb{P}^N,\, \mathcal{O}_{\mathbb{P}^N}(m \cdot n))}$$
is surjective. From the commutativity of \eqref{commutative}, it is enough to show that the restriction map
$$H^0(\mathbb{P}^N,\, \mathcal{O}_{\mathbb{P}^N}(m \cdot n))\ \longrightarrow\ {H^0(Z,\, L^{\otimes m})}$$
is surjective. This will follow using induction. We denote the intermediate subvarieties by $ \bP^N \, \supset Z_1
\, \supset \, Z_2 \, \supset \, Z_3 \, \supset \, \cdots \,  \, \supset\, Z_{k-1} \,\supset\, Z_k=Z$ of
$\bP^N$ successively defined by the vanishing locus of polynomials of degrees $a_1,\, a_2, \, \cdots,\, a_k$ respectively.
Consider the \ses
$$0 \,\longrightarrow\, \cO_{\bP^N}(-a_i)\big\vert_{Z_{i-1}}\, \longrightarrow\, \cO_{Z_{i-1}}
\, \longrightarrow\, \cO_{Z_i} \,\longrightarrow\, 0.$$
Tensoring it with $\cO_{\bP^N}(m \cdot n)$, and taking the \les in cohomology, we get the map 
\begin{equation} \label{restriction}
H^0(Z_{i-1},\, \cO_{\bP^N}(m \cdot n)\big\vert_{Z_{i-1}})\ \longrightarrow\ H^0(Z_{i},\,
\cO_{\bP^N}(m \cdot n)\big\vert_{Z_{i}})
\end{equation}
is surjective, provided that $H^1(Z_{i-1},\, \cO_{\bP^N}((m \cdot n)-a_i)\big\vert_{Z_{i-1}})\,=\,0$. Thus  it is enough to show that $H^1(Z_i, \, \cO_{\bP^N}(l)\big\vert_{Z_{i}})\,=\,0$ for all
$1 \,\le\, i \,\le\, (k-1)$ and for all $l \,\in\, \bZ$. We observe that $\dim\, (Z_i) \,\ge\, 2$ for all
$1 \,\le\, i \,\le\, (k-1)$. Using arguments
similar to the proof in \cite[Theorem 4.2]{parameswaran2024ulrich}, we can prove
that $H^1(Z_i, \, \cO_{\bP^N}(l)\big\vert_{Z_i})\,=\,0$ for all $1 \,\le\, i \,\le\, (k-1)$ and for all $l \,\in\, \bZ$. Thus
the restriction map in \eqref{restriction} is surjective for every $1 \,\le\, i \,\le\, k$. Taking the
composition of these restriction maps, we get 
$$H^0(\bP^N, \, \mathcal{O}_{\mathbb{P}^N}(m \cdot n))\ \longrightarrow\ {H^0(Z, \, L^{\otimes m})}$$ is surjective.
\end{proof}

Using Proposition \ref{ciexamples}, we can construct numerous examples of abelian coverings admitting \rul 
bundles. Let $L_i$ be line bundle $\cO_{\bP^N}(\alpha_i)\big\vert_{Z}$ on $Z$ for all $1 \,\le\, i \,\le\, l$. We consider 
the branch divisors $s_i \,\in\, H^0(Z,\, L_i^{\otimes d_i})$ for every $1\,\le\, i \,\le\, l$. Let $\pi\,:\, X 
\, \longrightarrow\, Z$ be the associated admissible abelian covering (see \eqref{a2}). Then by Proposition \ref{ciexamples}, 
the sufficient criterion in Corollary \ref{admissiblecor} for $s_i$ is satisfied for all $1 \,\le\, i \,\le\, l$.
Consequently, the following is obtained:

\begin{corollary} \label{comintcor}
There exists a \rul coherent sheaf on $X$ \wrt $\pi$.
\end{corollary}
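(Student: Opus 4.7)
The plan is to verify the hypothesis of the sufficient direction of Theorem~\ref{main1} (equivalently, of the corollary following Theorem~\ref{sufficient}) and then invoke that result. The argument is a direct composition of the two pieces of machinery just established.

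First I would apply Proposition~\ref{ciexamples} to the complete intersection $Z \hookrightarrow \bP^N$ with line bundle $L = L_i = \cO_{\bP^N}(\alpha_i)_{|Z}$ (taking $n = \alpha_i$) and multiplicity $m = d_i$, once for each index $1 \le i \le l$. This yields surjectivity of the $d_i$-fold multiplication map
\[
H^0(Z,\, L_i)^{\otimes d_i}\ \longrightarrow\ H^0(Z,\, L_i^{\otimes d_i}),
\]
so in particular the branch section $s_i$ lies in its image. Next I would feed these $l$ facts into the corollary following Theorem~\ref{sufficient}, applied to the abelian covering $\pi\,:\, X \,\longrightarrow\, Z$: with the factorization hypothesis on each $B_i$ now met, the corollary produces a \rul bundle on $X$ \wrt $\pi$.

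There is essentially no substantive obstacle, since this is an assembly of two named results. The real work was done earlier: Proposition~\ref{ciexamples} reduces the surjectivity of the $d_i$-fold multiplication on $Z$ to the well-known surjectivity on $\bP^N$, via vanishing of $H^1$ of the intermediate restrictions $\cO_{\bP^N}(l)_{|Z_i}$, while the corollary following Theorem~\ref{sufficient} reduces the existence of a \rul bundle on the abelian cover to the factorization hypothesis on each $s_i$ through an iterated matrix-factorization construction on the cyclic layers. The only minor points to flag are the implicit assumption $\alpha_i \ge 0$ needed for Proposition~\ref{ciexamples} to apply, and the tacit requirement that the $s_i$ be chosen so that the resulting abelian cover $X$ is smooth, which is precisely the standing hypothesis of Section~\ref{section2} under which Theorem~\ref{sufficient} (and hence its corollary) was proved.
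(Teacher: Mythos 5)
Your proposal matches the paper's argument exactly: the paper likewise obtains the corollary by combining Proposition~\ref{ciexamples} (surjectivity of each $d_i$-fold multiplication map for $L_i=\cO_{\bP^N}(\alpha_i)_{|Z}$) with the corollary to Theorem~\ref{sufficient} for the abelian cover $\pi\colon X\to Z$, the justification being given in the paragraph preceding the statement rather than in a separate proof environment. Your additional remarks on $\alpha_i\ge 0$ and the smoothness of $X$ are reasonable caveats that the paper leaves implicit.
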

	
The sufficient criterion in Theorem \ref{sufficient} is closely related to the notion of normal generation of 
the line bundle $L$. Thus, when $Y$ is normal and $L$ is very ample, the criterion is related to the 
projective normality of the embedding of $Y$ in $\bP(H^0(Y,\, L))$.

Recall that a line bundle $L$ on a projective variety $Y$ is normally generated if the natural multiplication 
map $Sym^d H^0(Y,\, L)\, \longrightarrow\, H^0(Y,\, L^{\otimes d})$ is surjective for all $d \ge 1$. Assume 
that $L$ is very ample and defines an embedding $Y\, \longrightarrow\, \bP(H^0(Y,\, L))$. If $Y$ is normal, 
then $L$ is normally generated if and only if this embedding is projectively normal.

The following examples are deduced as applications of Corollary \ref{admissiblecor}.

\begin{example} \label{ngex1}
Let $C$ be a smooth projective curve of genus $g$, and let $L$ be a line bundle on $C$. By a classical 
result, if $\deg (L) \,\ge\, 2g+1$, then $L$ is normally generated \cite[\S~1.8.D, p. 116]{Lazarsfeld17}. Fix a 
tuple $d\,=\,(d_1,\, d_1,\, \cdots,\, d_l)$, where each $d_i \,\ge\, 2$ is an integer. Let $L_i$ be line bundles
on $C$ 
such that $\deg (L_i) \,\ge \,2g+1$. Therefore, every admissible abelian Galois cover 
$\pi\,:\,D\,\longrightarrow\, C$ associated with the data $$\{(d_i, B_i',\, L_i)\,\ \big\vert\,\ B_i'\ \in\ |L_i^{\otimes 
d_i}|\}$$ admits a relatively Ulrich sheaf \wrt $\pi$. In particular, when $l\,=\,1$, every smooth ramified 
cyclic cover of $C$ admits a relatively Ulrich vector bundle.
\end{example}	

\begin{remark}
In higher dimensions, similar examples of relatively Ulrich sheaves can be constructed using the
Castelnuavo-Mumford regularity theorem \cite[Theorem 1.8.5]{Lazarsfeld17}. 
\end{remark}

\begin{example} \label{ngex2}
	Let $Y$ be a smooth projective toric surface. For each $1 \,\le\, i \,\le\, l$, let $M_i$ be an ample line bundle on $Y$. Then it is known that $M_i$ is also globally generated. By \cite[Theorem 1]{Fakhruddin02}, the multiplication map 
	$$H^0(Y, \,M_i)^{\otimes d_i} \,\longrightarrow\, H^0(Y,\,M_i^{\otimes d_i})$$
	is surjective. Hence, every admissible abelian cover $\pi\,:\,X \longrightarrow Y$ constructed from the data $(B'_i,\,M_i)$ with $B'_i\, \in\, |M_i^{\otimes d_i}|$ admits a relatively Ulrich sheaf $E$ on $X$ \wrt $\pi$. Moreover, if $X$ is smooth, then $E$ is a \rul bundle on $X$ \wrt $\pi$.
\end{example}

	In the following remark, we describe an equivalence between existence of relatively Ulrich sheaves and representations of certain generalized Clifford algebras. The formulation is motivated by the generalized Clifford algebra framework used in \cite{CKM2011}.
	
\begin{remark}[{Connection to representations of generalized Clifford algebra}]\label{gencliffrel}
Let $L$ be a line bundle on a projective variety $Y$. Take a nonzero section $s\,\in\, H^0(Y,\, L^{\otimes d})$
for some integer $d \,\geq\, 2$. Let $\pi\,:\,X\, \longrightarrow\, Y$ be the standard cyclic cover of degree
$d$ associated with the data $(s,\, L)$.
Denote $V\, =\, H^0(Y,\, L)$ and $W\,=\, H^0(Y,\, L^{\otimes d})$, and consider the  multiplication map
$$\mu_d\ :\ V^{\otimes d}\ \longrightarrow \ W.$$
Let $\mathbf {x} \,\in\, V^{\vee}  \otimes V$ be the element corresponding to
$\id_{V} \, \in\,  \End(V,\,V)$. Denote the tensor algebra of $V^{\vee}$ by $T(V^{\vee})$. We define
		\begin{equation}
			C(Y,L,s) \,=\,T(V^\vee)\Big/ \left\langle (\id \otimes\lambda) \left( (\id \otimes\mu_d)(\mathbf x^d)-1\otimes s \right) \,:\,\lambda\in W^\vee
			\right\rangle .
		\end{equation}
		Here $\mathbf{x}^d \in T(V^{\vee}) \otimes V^{\otimes  d}$. Indeed, if $v_1, v_2, \cdots, v_n$ is a basis of $V$ and $x_1, x_2, \cdots, x_n$ is the dual basis of $V^\vee$, then $\mathbf{x}\,=\,\sum_{i} x_i \otimes v_i$. Hence $\mathbf{x}^d\,=\,\big(\sum_{i} x_i \otimes v_i\big)^d\,=\,\sum_{i_1, i_2, \cdots, i_d} (x_{i_1} x_{i_2}\cdots x_{i_d}) \otimes (v_{i_1} \otimes v_{i_2} \otimes \cdots \otimes v_{i_d}) \in T(V^{\vee}) \otimes V^{\otimes  d}$. Here $(x_{i_1} x_{i_2}\cdots x_{i_d})$ denotes the noncommutative product in $T(V^{\vee})$, whereas $(v_{i_1} \otimes v_{i_2} \otimes \cdots \otimes v_{i_d})$ is the ordinary tensor product of sections. Thus $\big((\id \otimes\mu_d)(\mathbf x^d)-1\otimes s \big) \in T(V^{\vee}) \otimes W$.

		By the universal property of the tensor algebra, a representation $\rho\,:\,T(V^{\vee}) \longrightarrow \End_k(U)$, where $U$ is a finite dimensional $k$-vector space, is the same as a $k$-linear map $V^{\vee} \longrightarrow \End_k(U)$. Equivalently, such a representation determines an element $A_\rho \in \End_k(U) \otimes V\,=\,\End_k(U) \otimes H^0(Y, L)$. Now $A_{\rho}^d \in \End_k(U) \otimes V^{\otimes d}$ is formed using multiplication in $\End_k(U)$ and the ordered tensor product in $V$. We then consider $(\id \otimes \mu_d)(A_{\rho}^d) \in \End_k(U) \otimes W$. 
		
		The quotient $C(Y,L,s)$ imposes the relation $(\id \otimes\mu_d)(\mathbf x^d)\,=\,1\otimes s$. The representation $\rho$ sends the universal element $\mathbf{x}$ to $A_{\rho}$. Hence, the relation becomes $ (\id \otimes \mu_d)(A_{\rho}^d)\,=\,\id_{U} \otimes s$ in $\End_k(U) \otimes W$.
		
Equivalently, $A_{\rho}$ defines an $\mathcal O_Y$-linear map
		\begin{equation} \label{matrdatum}
			\varphi_{\rho}\ :\ U\otimes_k\mathcal O_Y\ \longrightarrow\ (U\otimes_k\mathcal O_Y) \otimes_{\mathcal O_Y} L ,
		\end{equation}
		satisfying $\varphi_{\rho}^d\,=\,s\cdot\id_{U\otimes_k\mathcal O_Y}$ as a morphism $U\otimes_k\mathcal O_Y \longrightarrow (U\otimes_k\mathcal O_Y) \otimes L^{\otimes d}$.
		
		We now explain the equivalence between a datum $\varphi_{\rho}$ (see \eqref{matrdatum}),
satisfying $\varphi_{\rho}^d\,=\,s\cdot\id_{U\otimes_k\mathcal O_Y}$
and the existence of a relatively Ulrich sheaf $E$ on $X$ \wrt $\pi$. A datum $\varphi_{\rho}$ \eqref{matrdatum}
implies that $s\,=\,\sum_i a_i^1\otimes a_i^2 \otimes \cdots\otimes a_i^d$. Thus, by the proof of
Theorem \ref{sufficient}, there exists a relatively Ulrich sheaf on $X$ \wrt $\pi$. Conversely, by
Proposition \ref{dfoldsurj}, the existence of a relatively Ulrich sheaf on $X$ \wrt $\pi$ induces a datum $\varphi_{\rho}$ satisfying the above property.
\end{remark}

\begin{remark}
Let $\pi\,:\, X\, \longrightarrow\, Y$ be a cyclic covering of smooth, projective varieties. We
assume that the branch divisor $s$ is in the image of the $d$-fold multiplication map
$H^0(Y,\, L^{\otimes d}) \,\longrightarrow\, H^0(Y,\, L)^{\otimes d}$. Then by Theorem \ref{sufficient}, there
exists a \rul bundle on $X$ \wrt $\pi$. Hence, by Proposition \ref{ggcyclic}, $L$ is generated by global
sections. In the following proposition, we will give a direct proof that the former criterion implies the latter. In fact, the proposition can be proved under weaker hypothesis: we do not assume that $X$ and $Y$ are smooth, but require the branch divisor $B$ to be smooth.
 \end{remark}

\begin{proposition} \label{dfoldgg}
Let $\pi\,:\, X \,\longrightarrow\, Y$ be a standard cyclic covering of projective varieties such that the
branch divisor $s$ is in the image of the $d$-fold multiplication map $$H^0(Y,\, L)^{\otimes d}
\ \longrightarrow\ H^0(Y,\, L^{\otimes d}).$$
Assume that $B$ is a smooth divisor. Then $L$ is generated by global sections. 
\end{proposition}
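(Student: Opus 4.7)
The plan is to prove global generation of $L$ by exhibiting, for every closed point $P \in Y$, a section appearing in a fixed decomposition
\[
s \;=\; \sum_i a_i^1 \otimes a_i^2 \otimes \cdots \otimes a_i^d, \qquad a_i^j \in H^0(Y,\, L),
\]
(guaranteed by the hypothesis) that does not vanish at $P$. I would split the argument based on whether $P$ lies on the branch divisor $B = \text{Div}(s) \in |L^{\otimes d}|$.

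If $P \notin B$, then $s(P) \neq 0$ in the fiber $L^{\otimes d}_P$. Since evaluation at $P$ is multilinear, $s(P) = \sum_i a_i^1(P) \otimes \cdots \otimes a_i^d(P)$, so at least one summand must be nonzero, forcing some $a_i^j(P) \neq 0$, as required.

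If $P \in B$, I would invoke the fact, recorded earlier in Section \ref{section2}, that $B$ is smooth since $X$ and $Y$ are smooth and $\pi$ is cyclic. Smoothness of $B$ at $P$ translates, upon picking a local trivialization of $L^{\otimes d}$ near $P$, to the statement that the local function representing $s$ lies in $\mathfrak{m}_P \setminus \mathfrak{m}_P^2$. Suppose for contradiction that every $a_i^j$ vanishes at $P$. Fixing a local frame of $L$ at $P$ and the induced frame of $L^{\otimes d}$, each decomposable element $a_i^1 \otimes \cdots \otimes a_i^d$ becomes the ordinary product of the local representatives of the $a_i^j$; since each factor lies in $\mathfrak{m}_P$, the product lies in $\mathfrak{m}_P^d$, and hence $s \in \mathfrak{m}_P^d$. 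For $d \geq 2$ this contradicts $s \notin \mathfrak{m}_P^2$, so some $a_i^j$ must be nonvanishing at $P$.

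The principal ingredient is the smoothness of the branch divisor: without it, the hypothesis at a branch point $P$ would only yield $s \in \mathfrak{m}_P^d$, which is consistent with arbitrary order of vanishing and with all $a_i^j(P) = 0$. The case $P \notin B$ is essentially formal, and the elementary inclusion $\mathfrak{m}_P^d \subset \mathfrak{m}_P^2$ for $d \geq 2$ is precisely what converts "smooth branch divisor" into a genuine obstruction to the simultaneous vanishing of all $a_i^j$ at $P$.
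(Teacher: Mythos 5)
Your proof is correct and follows essentially the same two-case strategy as the paper: decompose $s$, handle points off the branch divisor by multilinearity of evaluation, and at branch points derive a contradiction with smoothness of $B$ from the simultaneous vanishing of all $a_i^j$. Your version of the second case is in fact more precise than the paper's (which loosely attributes the contradiction to "the smoothness of $Y$"), since you correctly pin the obstruction on $s\in\mathfrak{m}_P^d\subset\mathfrak{m}_P^2$ versus the smoothness of the branch divisor established earlier in Section \ref{section2}.
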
 

\begin{proof}
	Since $s$ is in the image of the $d$-fold multiplication map, $s$ is of the form
	$$s\ =\ \sum_{i \in I} a^i_1 \otimes a^i_2 \otimes \ldots \otimes a^i_d.$$
	Then the sections $\{a^i_j\,\,\big\vert\,\, 1 \,\le\, j \,\le\, d,\, i \,\in\, I\}$ of $H^0(Y,\,
L)$ generates $L$ can be in the following way.\\
	
\textbf{\underline{Case 1}} Let $x \,\notin\, Z(s)$. Then at-least, one of the $a^i_j$ does not vanish at 
$x$, giving $0 \,\ne\, (a^i_j)_{|x} \,\in\, L_x$.\\
	
\textbf{\underline{Case 2}} Let $x \,\in\, Z(s)$. If for all $i,\, j$, the sections $a^i_j$ vanish at $x$, then 
the branch divisor $B\,=\,Z(s)$ will have a singularity at $x$, contradicting the smoothness of $B$. Thus at-least 
one of the $a^i_j$ is nonzero at $x$ generating $L_x$.
\end{proof}

Let $\pi\,:\, X \,\longrightarrow\, Y$ be a standard cyclic Galois covering of degree $d$ between smooth
projective varieties. The 
sufficient condition for the existence of \rul vector bundles, namely the condition
that the branch divisor in $H^0(Y,\, L^{\otimes d})$ is in 
the image of the $d$-fold multiplication map, is stronger than the condition
that $L$ is a globally generated line bundle 
on $Y$ (see Proposition \ref{dfoldgg}). In the following example, we construct  a cyclic covering such that the line bundle $L$ is globally 
generated, and  yet there are no \rul vector bundles. 

\begin{example} \label{ggdfoldexample}
 Let $L$ be a line bundle of degree two on an elliptic curve $D$. Then $h^0(D, \, L)\,=\,2$ by Riemann-Roch.
For a point $x\, \in\, D$, using the \ses
 $$0 \ \longrightarrow\ L(-x) \ \longrightarrow\ L\ \longrightarrow\ L_{|x}\ \longrightarrow\ 0,$$
it follows that the evaluation map $H^0(D,\, L) \,\longrightarrow\, L_{|x}$ is surjective. Thus $L$ is
globally generated. Consider the evaluation map
 \begin{equation} \label{ggseq}
 0\ \longrightarrow\ L^{-1}\ \longrightarrow\ H^0(D,\, L) \otimes_k \cO_D\ \longrightarrow\ L
\ \longrightarrow\ 0.
 \end{equation}
Applying the tensor product by $L$ to \eqref{ggseq}, and taking the \les of cohomologies, we get the
following exact sequence
$$0\, \longrightarrow\, H^0(D,\, \cO_D)\, \longrightarrow\, H^0(D,\, L) \otimes H^0(D,\, L)
\, \longrightarrow\, H^0(D,\, L^{\otimes 2})\, \longrightarrow\, H^1(D,\, \cO_D) \,\longrightarrow\, 0.$$ 
This shows that the natural map $H^0(D,\, L) \otimes H^0(D,\, L)\,\longrightarrow\, H^0(D,\, L^{\otimes 2})$
is not surjective. Thus we can construct a double cover $\pi\,:\,C \,\longrightarrow\, D$ branched over a
smooth divisor in $|H^0(D,\, L^{\otimes 2})|$ outside the image of $H^0(D,\, L) \otimes H^0(D,\,L)$. Note that $C$ is smooth because it corresponds to a divisor lying outside the image of $H^0(D,\, L) \otimes H^0(D,\,L)$. Then
there is no \rul vector bundle on $C$ \wrt $\pi$ by Proposition \ref{dfoldsurj}. It will be interesting
to give a direct proof that there is no \rul vector bundle on $C$ \wrt $\pi$.
\end{example}

We end the paper by posing the following question.
\begin{question} \label{Questrank}
We have provided a sufficient condition for the existence of a \rul coherent sheaves. It would be interesting to 
determine the minimal possible rank of \rul sheaves (or vector bundles when applicable) on a given class of varieties. In Corollary \ref{minrank2} it was shown that when the branch section is of the form $s\,=\,a_1 \otimes a_1 \otimes \cdots \otimes a_d$, there exists a \rul sheaf of rank $1$ on the associated cyclic covering. More generally, It would be interesting to relate specific forms of the branch divisor 
$s$ and the minimal rank of \rul sheaves on the associated cover. As a first case, one may consider
 cyclic covers of complete intersection curves.
\end{question}

The analogue question has been studied for Ulrich bundles in the case where the base $Y$ is a projective space. In particular, it was studied for double covers of $\bP^2$ in \cite[Lemma 4.2, Theorem 4.4]{MOHANKUMAR2025107946}, and was later generalized to double covers of $\bP^n$ in \cite[Theorem 1.2]{vacca25}.

 \subsection*{Acknowledgements}

The authors are very grateful to the referee for very helpful comments.
The first-named author is supported by a J. C. Bose Fellowship (JBR/2023/000003). The second-named author was 
supported by a postdoctoral fellowship at TIFR, Mumbai, where this work was initiated during a visit by the 
first-named author. The second-named author was subsequently supported by a Visiting Scientist position at 
ISI Bangalore, during which this work was completed. The second-named author is currently supported by an 
Institute postdoctoral fellowship at IISER Pune. The authors thank Roberto Vacca for bringing the references 
\cite{AK2026} and \cite{vacca25} to their attention.

\end{document}